\documentclass[a4paper,11pt]{article}
\usepackage{graphicx} 
\usepackage{amsmath, relsize} 
\usepackage{todonotes} 
\usepackage{amsthm} 
\usepackage{amsfonts}
\usepackage{amssymb}
\usepackage{array}
\usepackage{url}
\usepackage{amsmath}
\usepackage{amssymb}
\usepackage{amsthm}
\usepackage{enumerate}
\usepackage{setspace}
\usepackage{graphicx,color,hyperref}
\usepackage[margin=1in]{geometry}
\parskip \medskipamount
\parindent	0pt


%
%
%


\newtheorem{theorem}{Theorem}[section]

\newtheorem{lem}[theorem]{Lemma}
\theoremstyle{definition}
\newtheorem{definition}[theorem]{Definition}
\newtheorem{remark}[theorem]{Remark}

\newcommand{\ind}[1]{{\text{\Large $\mathfrak 1$}}\left(#1\right)}
\newcommand{\PR}{P}
\newcommand{\E}{E}
\newcommand{\Var}{\mathrm{Var}}
\newcommand{\lr}[1]{\left(#1\right)}
\newcommand{\lrb}[1]{\left[#1\right]}
\newcommand{\lrc}[1]{\left\{#1\right\}}

\newcommand{\compl}{\textrm{c}}


\begin{document}

\title{Critical density of activated random walks on transitive graphs}
\author{Alexandre Stauffer\thanks{University of Bath, Bath, UK; a.stauffer@bath.ac.uk. Supported  by a Marie Curie Career Integration Grant PCIG13-GA-2013-618588 DSRELIS, and an EPSRC Early Career Fellowship.} \and Lorenzo Taggi\thanks{Technische Universit\"at Darmstadt, Darmstadt, DE; taggi@mathematik.tu-darmstadt.de. Supported by a German Research Foundation Grant BE 5267/1.}}
\date{}
\maketitle

\begin{abstract}
   We consider the activated random walk model
   on general vertex-transitive graphs. A central question in this model is whether the critical density $\mu_c$ for sustained activity is strictly between 0 and 1. 
   It was known that $\mu_c>0$ on $\mathbb{Z}^d$, $d\geq 1$, and that $\mu_c<1$ on $\mathbb{Z}$ for small enough sleeping rate.
   We show that $\mu_c\to 0$ as $\lambda\to 0$ in all vertex-transitive transient graphs, implying that $\mu_c<1$ for small enough sleeping rate.
   We also show that $\mu_c<1$ for any sleeping rate 
   in any vertex-transitive graph in which simple random walk has positive speed. Furthermore, we prove that $\mu_c>0$ in any vertex-transitive amenable graph, and that $\mu_c\in(0,1)$ for any sleeping rate on regular trees.
\end{abstract}

\section{Introduction}\label{sec:intro}
We consider the activated random walk (ARW) model on 
a graph $G=(V,E)$. 
This is a continuous-time interacting particle system with conserved number of particles, 
where each particle can be in one of two states: A (active) or S (inactive, sleeping). 
Initially, the number of particles at each vertex of $G$ is an independent Bernoulli random variable of parameter $\mu\in(0,1]$, 
usually called the \emph{particle density}, and all particles are
of type A. 
Each A-particle performs an independent, continuous time
random walk on $G$ with jump rate $1$, and
with each jump being to a uniformly random neighbor.
Moreover, every A-particle has a Poisson clock of rate $\lambda>0$
(called the \emph{sleeping rate}).
When the clock of a particle rings,
if the particle does not share
the site with other particles,
the transition $A \rightarrow S$ occurs (that is, the particle becomes of type S); otherwise nothing happens.
Each S-particle does not move and remains sleeping until another particle jumps into its location.
At such an instant, the S-particle turns into type A, giving the transition A+S $\rightarrow$ 2A.

For any given $\lambda$, it is expected that ARW undergoes a phase transition as $\mu$ varies. For example,
if $\mu$ is very small, there is a lot of empty space between particles, which allows each particle to eventually 
fall asleep (that is, turn into type $S$) and never become active again. When this happens, we say that \emph{ARW fixates}. 
When this does not happen, we say that \emph{ARW is active}. This case is expected to occur when $\mu$ is large, since active particles will 
repetitively jump on top of other particles, ``waking up'' the ones that had turned into type $S$. 

In a seminal paper, Rolla and Sidoravicius~\cite{Rolla} showed that this process satisfies a 0-1 law (i.e., the process is either 
active or fixated with probability 1) and is monotone with respect to $\mu$. This gives the existence of a critical value 
\begin{equation}
   \mu_c=\mu_c\left(\lambda\right)  := \inf\lrc{ \mu \geq 0 \, : \, \PR(\text{ARW is active})  > 0   }
   \label{eq:criticaldensity}
\end{equation}
such that ARW is active almost surely for all $\mu > \mu_c$, and fixates almost surely for all $\mu<\mu_c$. 
Though~\cite{Rolla}, as well as almost all existing works, are restricted to the case of $G$ being $\mathbb{Z}^d$, 
the above properties hold for any vertex-transitive graph. 
A graph $G$ is called \emph{vertex transitive} if, for any two vertices $u,v\in V$, there exists a graph automorphism of $G$ mapping $u$ onto $v$.
Throughout this paper we always consider that $G$ is an infinite graph that is locally finite and \emph{vertex transitive}, which ensures the existence of $\mu_c$.


Our definition above implies that $\mu_c\leq 1$ since particles are initially distributed as Bernoulli random variables. However, even if we replace this with any product measure of density $\mu>0$, 
it is intuitive that $\mu_c\leq 1$, since at most one particle can fall asleep at any given vertex.
This has been established for a large class of graphs~\cite{Rolla,Shellef,Amir}.
A fundamental and very important problem in activated random walks~\cite{Rolla,Dickman} is whether
\begin{equation}
   \text{$\mu_c\in(0,1)$ for all $\lambda> 0$ and all vertex-transitive graphs.}
   \label{eq:question}
\end{equation}
This problem is widely open, 
and both sides of the above question (i.e., whether $\mu_c>0$ or $\mu_c<1$) turned out to be 
very challenging. 
In fact, even showing that $\mu_c<1$ for \emph{some} $\lambda>0$ is already quite difficult, and is 
only known to hold on $\mathbb{Z}$, thanks to a very recent paper by Basu, Ganguly and Hoffman~\cite{Basu}.
Our first theorem establishes this result in all vertex-transitive \emph{transient} graphs, 
which includes $\mathbb{Z}^d$, $d\geq 3$.
\begin{theorem}
   \label{theo:transient_graphs}
   For any vertex-transitive, transient graph, it holds that 
   $$
      \mu_c \to 0 \text{ as } \lambda\to0.
   $$
   More specifically, we have that $\lim_{\lambda\downarrow 0}\frac{\mu_c}{\lambda^{1/4}}<\infty$.
\end{theorem}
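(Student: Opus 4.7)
The plan is to show that ARW does not fixate whenever $\mu \ge C\lambda^{1/4}$, for some $C = C(G)$ and all sufficiently small $\lambda > 0$. By the monotonicity of fixation in $\mu$, this yields $\mu_c(\lambda)\le C\lambda^{1/4}$ and hence $\limsup_{\lambda\downarrow 0}\mu_c/\lambda^{1/4}<\infty$. The starting point is the Diaconis--Fulton/site-wise representation of Rolla--Sidoravicius: each vertex $v$ carries an i.i.d.\ stack of instructions, each of which is independently ``move to a uniformly random neighbor'' (with probability $\tfrac{1}{1+\lambda}$) or ``attempt to sleep'' (with probability $\tfrac{\lambda}{1+\lambda}$). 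By abelianness, the odometer $m(v)$ depends only on the initial configuration and on the stacks. Restricting topplings to a ball $B_n$, with particles leaving $B_n$ discarded, yields the finite-volume odometer $m_n(v)$; a standard criterion gives that ARW is active on $G$ iff $m_n(o)\to\infty$ in probability as $n\to\infty$, for any fixed vertex $o$.

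To estimate $m_n(o)$, I would stabilize particle-by-particle, using abelianness. Fix an enumeration of the initial active particles and process them sequentially: let each particle follow its trajectory, consuming the instructions on the stacks it encounters, until it either exits $B_n$ or (being alone under our exploration order) successfully sleeps. If an explored particle arrives at a vertex holding a sleeper, wake the sleeper and enqueue it for later processing, and continue the current exploration. Under this scheme each trajectory is a simple random walk on $G$ of $\mathrm{Geom}(\tfrac{\lambda}{1+\lambda})$ length (mean $\sim 1/\lambda$), stopped at $\partial B_n$ if sooner. Non-fixation is equivalent to the exploration queue never emptying---equivalently, to $m_n(o)$ being unbounded with positive probability.

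Transience now enters. A simple random walk on a transient vertex-transitive graph of length $T = 1/\lambda$ has range $\Theta(T) = \Theta(1/\lambda)$, so the $\mu|B_n|$ explored trajectories together cover $B_n$ with average multiplicity $\mu/\lambda$. Heuristically, as soon as $\mu/\lambda \gg 1$, every deposited sleeper is visited (and hence woken) by some later exploration, so the queue propagates indefinitely. To promote this first-moment heuristic to a rigorous statement on $m_n(o)$, I would carry out a second-moment estimate on the number of wake-ups. The relevant variance is controlled by the uniform boundedness of the transient Green's function $G(x,y)$, which bounds the expected number of intersections of two independent simple random walks of length $1/\lambda$ by $C/\lambda$. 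Paley--Zygmund applied to the counting functional then requires a threshold of the form $\mu^{4}\gtrsim\lambda$ (rather than the heuristic $\mu\gtrsim\lambda$), giving the stated bound $\mu\ge C\lambda^{1/4}$.

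The main obstacle is carrying out this second-moment computation rigorously. The positions of deposited sleepers depend on the trajectories of earlier explorations, so the wake events are genuinely correlated, and the abelian exploration order encodes these dependencies in a subtle way. Decomposing the process so that one can apply independent-SRW estimates (using the bounded Green's function), while faithfully counting wake-ups, is the technical heart of the argument and the source of the exponent $\tfrac14$, in place of the na\"ive $1$, in the final bound.
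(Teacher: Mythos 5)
There is a genuine gap: your proposal defers precisely the step that constitutes the proof. You correctly identify that the sleeping particles' positions are determined by earlier explorations and hence correlated, and you correctly guess that a second-moment argument produces the exponent $1/4$, but you give no mechanism for either. In particular, the assertion that Paley--Zygmund ``requires a threshold of the form $\mu^4\gtrsim\lambda$'' is not derived from anything: your first-moment heuristic gives $\mu\gtrsim\lambda$, and nothing in the sketch explains why intersection estimates via the Green's function would degrade this to $\lambda^{1/4}$ rather than, say, $\lambda^{1/2}$ or $\lambda$. The coverage argument itself (trajectories of length $1/\lambda$ covering $B_n$ with multiplicity $\mu/\lambda$, so every sleeper is woken) is also not the right event to control: for the $0$--$1$ law one needs the number of topplings at a \emph{fixed} vertex to diverge, and ``the queue never empties'' is vacuous in finite volume.

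The paper resolves the correlation problem by a different decomposition, and it is worth seeing where the two square roots actually come from. First, a \emph{weak stabilization} of $(y,B_L)$ --- stabilizing every site except $y$, which is allowed to hold one active particle --- lets one write the full stabilization as a sequence of weak stabilizations separated by single instructions at $y$. This yields a bound on the \emph{one-point marginal} of the sleeping configuration: the probability $Q(y,B_L)$ that $y$ ends with a sleeper is at most $\lambda H + (C_G(1+\lambda)+1)/H$ for every $H$, hence at most $q:=3\sqrt{\lambda(C_G(1+\lambda)+1)}$ --- the first square root, and the only place transience of $G$ is used quantitatively (via finiteness of the Green's function $C_G$). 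Second, one launches a \emph{ghost} random walk from each sleeper of the final configuration and compares $W_L$ (visits to $x$ by initial particles) with $R_L$ (visits by ghosts); then $E[W_L-R_L]\geq(\mu-q)E[N_0]\sum_y p_y\to\infty$. The correlations among sleepers are then handled crudely but sufficiently: writing $s_{y,z}=E[S_yS_z]$ for the two-point function, one only needs $s_{y,z}-s_ys_z\leq s_y\leq q$, which gives $\Var(R_L)/E^2[W_L-R_L]\lesssim q/(\mu-q)^2$; requiring this to be below $1$ forces $\mu\gtrsim\sqrt{q}\sim\lambda^{1/4}$ --- the second square root. Without an analogue of the marginal bound $Q(y,B_L)\leq q$ and a workable substitute for independence of the wake events, your sketch cannot be completed as stated.
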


Regarding the question of whether $\mu_c<1$ for \emph{all} $\lambda>0$,
this has until now not been established for any single graph.
A positive answer to this question
has only been given for a variant of ARW where particles move according to \emph{biased} 
random walks on $\mathbb{Z}^d$; see Taggi~\cite{Taggi}. 
Rolla and Tournier~\cite{Rolla2} further extended
this result by proving that, for biased ARW on $\mathbb{Z}^d$, we have $\mu_c(\lambda)\to0$ as $\lambda\to 0$. 
In our second theorem we give a positive answer 
to this question for the original, unbiased model, and for all 
graphs where simple random walk has positive speed.
If $(X_t)_{t\in\mathbb{N}}$ is a random walk on $G$ starting from a vertex $x$, and $|X_t|$ denotes the distance between $X_t$ and $x$,
we say that a random walk on $G$ has \emph{positive speed} if $\liminf_{t\to\infty} \frac{|X_t|}{t}>0$ almost surely. 
This includes, for example, all non-amenable graphs that are vertex transitive.
\begin{theorem}
   \label{theo:activity_non_amenable}
   For any vertex-transitive graph such that a random walk on it has positive speed $\alpha$, it holds that
   $$
      \mu_c <1 \quad\text{ for all } \lambda>0.
   $$
   More specifically, we obtain that $\mu_c < 1 - \frac{ \alpha\delta }{1 + \lambda}$,
   where $\delta$ is the probability that a random walk does not return to the origin. 
\end{theorem}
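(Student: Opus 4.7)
My plan is to reduce the question of activity to an exploration argument that leverages positive speed, using the Diaconis--Fulton stacks-of-instructions representation and the abelian property of ARW (as developed by Rolla--Sidoravicius). First I would set up the representation: at each vertex $v$, place an i.i.d.\ stack of instructions, each being independently a ``jump to a uniformly random neighbour'' with probability $\frac{1}{1+\lambda}$ or an ``attempt to sleep'' with probability $\frac{\lambda}{1+\lambda}$. Stabilization of a finite ball $B_n$ defines an odometer $M_n(o)$, and ARW is active if and only if $M_n(o)\to\infty$ almost surely for some (equivalently every) vertex $o$. Monotonicity also allows me to place one extra active particle at $o$ without loss.

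Next I would introduce an \emph{explorer}. By abelianness I may stabilize in any order, so I start by letting the designated active particle at $o$ execute its instructions while deferring any of its own sleep attempts until after this first phase. The explorer then performs a simple random walk on $G$, and by positive speed together with the escape probability $\delta$, I expect it to visit new territory at rate at least $\frac{\alpha\delta}{1+\lambda}$ per event: here $\frac{1}{1+\lambda}$ is the fraction of events that are jumps, $\delta$ is the asymptotic fraction of jumps landing in a never-revisited vertex, and $\alpha$ quantifies how spread out the explored region is, ensuring genuinely new vertices are reached. Each newly visited vertex contains a sleeping particle with probability $\mu$, and each such awakening produces a new source which itself explores. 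Comparing the resulting activation rate $\mu\cdot\frac{\alpha\delta}{1+\lambda}$ against the rate $\frac{\lambda}{1+\lambda}$ at which an isolated active particle goes to sleep, I expect the branching process of sources to be supercritical precisely when $\mu > 1-\frac{\alpha\delta}{1+\lambda}$, yielding activity.

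The main obstacle, in my view, will be controlling the interactions between multiple active sources. Two active particles sharing a vertex cannot sleep, and their trajectories are not independent; furthermore, an explorer that re-enters a region already swept by a previous source produces no new activation. Positive speed is the key resource here: with probability $\delta$ the source escapes, and its escape trajectory separates at a linear rate from those of other sources, so asymptotic independence can be expected. To make this quantitative I would restrict the branching argument to a ``skeleton'' consisting of disjoint escape segments (each of length tending to infinity at speed $\alpha$), bound the odometer below by the first-moment contribution of this skeleton, and read off the exact constant $1-\frac{\alpha\delta}{1+\lambda}$ from the resulting budget inequality between awakenings and sleepings.
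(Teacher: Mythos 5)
Your proposal takes a genuinely different route from the paper, but there are real gaps that prevent it from working as stated.

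The most concrete problem is that your budget inequality does not actually yield the claimed threshold. You propose comparing an activation rate of $\mu\cdot\frac{\alpha\delta}{1+\lambda}$ against a sleeping rate of $\frac{\lambda}{1+\lambda}$; the resulting supercriticality condition is $\mu\alpha\delta>\lambda$, i.e.\ $\mu>\frac{\lambda}{\alpha\delta}$, not $\mu>1-\frac{\alpha\delta}{1+\lambda}$. These are quite different: your threshold exceeds $1$ as soon as $\lambda>\alpha\delta$, so the argument as set up cannot prove $\mu_c<1$ for all $\lambda>0$, which is the whole point of the theorem. The issue is conceptual. The correct budget, which the paper extracts, is not ``awakenings versus sleepings of a source'' but ``particles starting from occupied sites versus spurious visits created by sleep instructions at \emph{empty} sites''; the sleep probability $\frac{\lambda}{1+\lambda}$ enters on both sides and the asymmetry that matters is $\mu$ versus $\nu_0=1-\mu$, which is why $1-\mu$ shows up in the bound. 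Relatedly, your interpretation of the three factors $\alpha,\delta,\frac{1}{1+\lambda}$ is off from what the paper's reduction actually uses: in the paper, $\delta$ controls $E[\tilde M_L]\gtrsim\frac{\delta}{1+\lambda}L$ (one visit per level when the walk escapes, and no sleep on the outward leg), while $\alpha$ enters through $E[M_L]\lesssim L/\alpha$ (positive speed bounds the expected number of level-visits before leaving $B_L$).

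The second gap is the interaction problem, which you correctly identify but do not resolve. A branching argument with active sources is precisely the kind of construction the abelian framework is designed to sidestep, because two sources can block each other's sleep attempts and re-explore each other's territory, and there is no obvious way to make a ``skeleton of disjoint escape segments'' rigorous; nothing prevents the skeleton from becoming empty. Your branching picture also does not directly produce the quantity that Rolla--Sidoravicius 0--1 law actually requires, namely that the odometer at a fixed vertex diverges; you would still need to argue that supercritical branching of sources forces infinitely many visits to a fixed origin, which is not automatic.

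The paper's route avoids both problems by committing to a single deterministic toppling order: stabilize $B_L$ level by level from the boundary inward, stopping each particle when it reaches the origin, an empty site, exits $B_L$, or sees a sleep instruction ``sufficiently far out''. The particles that fall asleep are replaced by ghost walks, and the crucial combinatorial observation is that each empty site can spawn at most one ghost. This turns activity into a clean first-moment comparison $E[\tilde W_L]-E[\tilde R_L]=(\mu+\nu_0)E[\tilde M_L]-\nu_0 E[M_L]$, followed by a Chebyshev second-moment bound on independent Bernoulli sums. There is no branching process and no need to control source interactions. If you wish to pursue your exploration idea, the part worth salvaging is the heuristic that positive speed and escape probability give linearly many ``useful'' level-visits; but the budget must be recast as particles-at-occupied-sites versus ghosts-at-empty-sites, and the toppling order must be chosen so that the abelian property makes the dependence structure tractable.
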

We prove the theorem above by providing general sufficient conditions for ARW to be active, which as a consequence establishes an upper bound on $\mu_c$. We believe this result is of independent interest 
and state it in Theorem~\ref{theo:suffcondnonamenable}. 

For the other side of~\eqref{eq:question} (i.e., whether $\mu_c>0$), 
there has been a bit more progress.
It has been settled when $G$ is $\mathbb{Z}^d$ thanks to the seminal work of 
Rolla and Sidoravicius~\cite{Rolla} for $d=1$, and an elaborate proof of 
Sidoravicius and Teixeira~\cite{Sidoravicius} for $d\geq 2$.
Our next theorem establishes that $\mu_c>0$ in any vertex-transitive \emph{amenable} graph, which includes $\mathbb{Z}^d$, $d\geq1$.
We remark that not only our result generalizes the ones in~\cite{Rolla,Sidoravicius}, but also provides the additional information
that $\mu_c \rightarrow 1$ as $\lambda \rightarrow \infty$. 
In addition, our proof is quite short in comparison to~\cite{Sidoravicius}.
\begin{theorem}
   \label{theo:mu_c_positive_amenable}
   For any vertex-transitive, amenable graph, we have 
   $$
      \mu_c >0 \quad\text{ for all } \lambda>0.
   $$
   More specifically, we have $\mu_c \geq \frac{\lambda}{1+\lambda}$.
\end{theorem}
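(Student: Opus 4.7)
My proof plan uses the site-wise (Diaconis--Fulton) representation of ARW and the abelian property: at each vertex $x$ one places an i.i.d.\ stack of instructions, each being ``jump to a uniformly random neighbor'' with probability $1/(1+\lambda)$ or ``try to sleep'' with probability $p := \lambda/(1+\lambda)$. Stabilization produces an odometer $m(x)$, the total number of instructions consumed at $x$, which by abelianness is independent of the toppling order; fixation is equivalent to $m(x)<\infty$ almost surely for every $x$.

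I would first restrict the stabilization to a F\o lner exhaustion $(F_n)$ of $G$, with $|\partial F_n|/|F_n|\to 0$, letting $m_n(x)$ denote the odometer of the $F_n$-restricted stabilization (particles that exit $F_n$ are absorbed). Since $m_n(x)\nearrow m(x)$ monotonically, it suffices to prove $\sup_n\E[m_n(0)]<\infty$ whenever $\mu<p$. Writing $m_n(x)=J_n(x)+\sigma_n(x)$ for the counts of jump and sleep instructions used at $x$, the local conservation law summed over $F_n$ gives
\[
N_n \;=\; \sum_{x\in F_n}\iota_n(x)\;+\;E_n,
\]
where $\iota_n(x)\in\{0,1\}$ indicates a remaining sleeping particle at $x$, $N_n=\sum_{x\in F_n}\eta_0(x)$ is the initial count (with $\E[N_n]=\mu|F_n|$), and $E_n$ is the outward flux through $\partial F_n$.

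The next step is a Wald-type identity $\E[\sigma_n(x)]=p\,\E[m_n(x)]$, established by choosing a toppling order (allowed by abelianness) that exposes each site's stack sequentially as it is consumed. Combining this identity with the conservation law and a pathwise comparison that controls the number of ``wake-up'' cycles at each site in terms of the jump flux, I expect to reach a bound of the shape
\[
\sum_{x\in F_n}\E[m_n(x)]\;\leq\;\frac{\mu\,|F_n|}{p-\mu}\;+\;O\bigl(|\partial F_n|\cdot\max_{x\in\partial F_n}\E[m_n(x)]\bigr),
\]
valid whenever $\mu<p$. Amenability makes the boundary term $o(|F_n|)$, and vertex-transitivity reduces $\E[m_n(0)]$ to a spatial average over $F_n$, yielding $\sup_n\E[m_n(0)]<\infty$ and hence $\E[m(0)]<\infty$ by monotone convergence, which is fixation.

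The hardest part will be the Wald-type identity $\E[\sigma_n(x)]=p\,\E[m_n(x)]$: because $m_n(x)$ depends on the global instruction configuration and is not a priori a stopping time for the filtration on $x$'s stack, optional stopping requires a carefully chosen toppling order that consumes instructions at $x$ one at a time. A secondary difficulty is controlling the boundary flux sharply enough to preserve the prefactor $1/(p-\mu)$; this typically requires a bootstrap from a weak a priori estimate on $\max_{x\in\partial F_n}\E[m_n(x)]$, and the hypothesis $\mu<p$ is precisely what makes the associated geometric series (from repeated wake-up cycles) convergent.
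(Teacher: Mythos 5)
Your proposal takes a genuinely different route from the paper's, and it has a gap at precisely the point you flag as ``the hardest part.'' The F\o lner exhaustion, the decomposition $m_n(x)=J_n(x)+\sigma_n(x)$, the mass-conservation identity $N_n=\sum_x\iota_n(x)+E_n$, and the Wald-type identity $\E[\sigma_n(x)]=p\,\E[m_n(x)]$ are all sound (the last one does hold: once the stacks at all sites other than $x$ are revealed, $m_n(x)$ is a stopping time for the filtration that reveals $x$'s stack one instruction at a time). But these ingredients do not combine to give your target bound $\sum_x\E[m_n(x)]\lesssim \mu|F_n|/(p-\mu)$. Conservation relates the initial particle count to the final sleeper count and the outflux and never mentions the odometer; Wald relates $\sigma_n(x)$ to $m_n(x)$ but never to the sleeper indicator $\iota_n(x)$. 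The step you describe as ``a pathwise comparison that controls the number of wake-up cycles at each site in terms of the jump flux'' is where the whole argument lives, and it looks circular: the number of wake-ups at $x$ is controlled by the number of jumps into $x$, which is of the same order as the odometer you are trying to bound, and you have no a priori estimate on $\max_{x\in\partial F_n}\E[m_n(x)]$ to bootstrap from. Note also that your target, $\sup_n\E[m_n(0)]<\infty$, is strictly stronger than what fixation requires ($m(0)<\infty$ almost surely), and near criticality the expectation could plausibly be infinite even when the process fixates.

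The paper's route is different and avoids these issues by working at the level of probabilities, not expectations. Its key tool is the lower bound $Q(x,K)\ge p\,\PR(m_K(x)\ge 1)$ from Theorem~\ref{theo:boundsQ}, proved via \emph{weak stabilization}: each time the weak stabilization of $(x,K)$ terminates with a single active particle at $x$, the next instruction at $x$ is a sleep instruction with probability $p=\lambda/(1+\lambda)$, which freezes $x$ for good. Under the contradiction hypothesis that ARW is active, $\PR(m_K(x)\ge 1)\to 1$, so the expected sleeper count in a F\o lner set $V_n$ (dilated by $K$) is at least $p(1-\epsilon)|V_n|$ up to $o(|V_n|)$ boundary corrections; since the sleeper count cannot exceed the initial particle count, of mean $(1+o(1))\mu|V_n|$, this forces $\mu\ge p$. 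Your Wald identity is a kind of dual of the paper's estimate (expected fraction of used instructions at $x$ that are sleeps equals $p$ versus probability that $x$ ends with a sleeper, given it was toppled, is at least $p$), but only the latter closes the argument. If you want to rescue your plan, the object to estimate is not $\E[\sigma_n(x)]$ but $\PR(\iota_n(x)=1\mid m_n(x)\ge 1)$, and the paper's weak stabilization is precisely the device that produces that estimate.
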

\begin{remark}
   Our lower bound is sharp, in the sense that there are no better
   lower bounds for $\mu_c$ which are just a function
   of $\lambda$ and hold for any vertex-transitive amenable
   graph and any jump distribution.
   Indeed, $\mu_c$
   is known to be equal to $ \frac{\lambda}{1+\lambda}$
   on $\mathbb{Z}$ with totally asymmetric jumps
   \cite{Hoffman}.
\end{remark}

\begin{remark}
   Our Theorems~\ref{theo:transient_graphs} and~\ref{theo:mu_c_positive_amenable} hold in more generality,  
   for any distribution of the initial location of the particles
   and for any jump distribution (biased or unbiased) which is translation invariant
   and has finite support.
\end{remark}

Note that Theorems~\ref{theo:activity_non_amenable} and~\ref{theo:mu_c_positive_amenable} provide 
a final answer to~\eqref{eq:question}
in vertex-transitive graphs that are amenable but for which a random walk has positive speed; for example, the so-called lamplighter graphs.
In our final result, we also establish~\eqref{eq:question} for the case of regular trees, excluding $\mathbb{Z}$. 
\begin{theorem}
   \label{theo:tree}
   When $G$ is a regular tree of degree at least 3, we have
   $$
      \mu_c\in(0,1) \quad\text{ for all }\lambda>0.
   $$
   In addition, we have $\mu_c\to 0$ as $\lambda\to 0$.
 \end{theorem}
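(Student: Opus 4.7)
The regular tree $T_d$ of degree $d\geq 3$ is vertex-transitive and transient, and simple random walk on it has positive speed $(d-2)/d$. Two of the three conclusions in Theorem~\ref{theo:tree} are therefore immediate: Theorem~\ref{theo:transient_graphs} gives $\mu_c\to 0$ as $\lambda\to 0$, and Theorem~\ref{theo:activity_non_amenable} gives $\mu_c<1$ for every $\lambda>0$. The substantive claim left to prove is $\mu_c>0$ for every $\lambda>0$.

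My plan for this lower bound is to adapt the site-wise (Diaconis--Fulton) stabilization of Rolla--Sidoravicius~\cite{Rolla} to the tree setting. At each vertex $v$ place an i.i.d.\ stack of instructions, each being ``sleep'' with probability $\lambda/(1+\lambda)$ and ``jump to a uniformly chosen neighbor'' with probability $1/(1+\lambda)$. In a finite ball $B_n$ around a root $o$, stabilization proceeds by toppling unstable sites in any legal order; by the abelian property, the resulting odometer $m_v^{(n)}$ does not depend on that order, and it suffices to bound $\mathbb{E}[m_o^{(n)}]$ uniformly in $n$ by a constant that vanishes as $\mu\to 0$.

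The heart of the proof is a cascade/branching argument. Using the abelian property, I would stabilize particles one at a time, starting from the configuration in which every initial particle sleeps at its birth site, and then activating particles one by one. In isolation, a single activated particle performs a simple random walk on $T_d$ killed whenever it pops a sleep instruction, which happens with probability $\lambda/(1+\lambda)$ per step; a standard computation gives that the expected total number of visits before being killed equals
$$
   \sum_{u\in T_d}\sum_{t\geq 0}\Bigl(\tfrac{1}{1+\lambda}\Bigr)^{t} p_t(v,u) \;=\; \sum_{t\geq 0}\Bigl(\tfrac{1}{1+\lambda}\Bigr)^{t} \;=\; \frac{1+\lambda}{\lambda}.
$$
Since every site independently carries an initial sleeping particle with probability $\mu$, the expected number of sleeping particles hit (and hence woken up) by one cascade step is at most $\mu(1+\lambda)/\lambda$. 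Iterating and using the abelian property to freeze the current explorer each time it meets a sleeping particle, the total size of the cascade triggered by a single activation is stochastically dominated by the total progeny of a Galton--Watson process with offspring mean $\mu(1+\lambda)/\lambda$, which is subcritical, and has finite mean, whenever $\mu<\lambda/(1+\lambda)$. Transferring this to a bound on $\mathbb{E}[m_o^{(n)}]$ then yields $\mu_c\geq\lambda/(1+\lambda)>0$.

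The main obstacle I anticipate is making the Galton--Watson domination fully rigorous: successive cascades share the same instruction stacks and draw from the same pool of sleeping particles, so the offspring counts across cascade steps are not literally independent. Standard devices, in particular revealing only the sleep/jump information along each explored walk, freezing each explorer at the moment it meets a sleeping particle, and observing that particles consumed in earlier cascades can only decrease rather than increase future wake-up probabilities, should resolve this. The tree's branching structure is also a useful asset here: cascades that enter different subtrees rooted at neighbors of $o$ are conditionally independent given the activity at $o$, which suggests organizing the bound branch by branch and then iterating across levels.
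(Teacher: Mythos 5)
The reductions of $\mu_c<1$ and $\mu_c\to0$ to Theorems~\ref{theo:activity_non_amenable} and~\ref{theo:transient_graphs} are correct and identical to the paper's. The substantive issue is the lower bound $\mu_c>0$, where you propose a Galton--Watson cascade domination; the paper instead adapts the Rolla--Sidoravicius ``corrupted vertices'' stabilization and compares it to a carefully tuned branching process on $\mathbb{Z}$ (Lemma~\ref{lem:branch}), with parameters $\alpha=\exp(-\mu/((1-\mu)(d-1)))$ and $\beta=\lambda/(1+\lambda)$. These are genuinely different routes, and the gap in yours is not a technicality. Your computation of the explorer's expected lifetime as $(1+\lambda)/\lambda$ assumes the walker is killed i.i.d.\ at rate $\lambda/(1+\lambda)$ per step, but a sleep instruction is a no-op whenever the walker shares its site with another particle. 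In particular, the moment your explorer wakes a sleeping particle, it can no longer be killed there (and on subsequent returns as long as the woken particle has not yet been moved), so both the lifetime bound and the offspring-mean bound $\mu(1+\lambda)/\lambda$ fail in the direction you need. The dependency between successive cascades is likewise not benign: an earlier cascade's explorer falls asleep somewhere and becomes a \emph{new} trap for later cascades, so ``particles consumed in earlier cascades can only decrease future wake-up probabilities'' is simply false --- sleeping particles are conserved and redistributed, not consumed. The freezing/revealing devices you gesture at are exactly what the corrupted-vertex bookkeeping in the paper (and in Rolla--Sidoravicius) is designed to replace; dispensing with that mechanism and keeping the argument correct is the entire difficulty.

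A further warning sign is quantitative: your argument would yield $\mu_c\geq\lambda/(1+\lambda)$ on the tree, but the paper proves $\mu_c\geq\lambda/(1+\lambda)$ only on amenable graphs (Theorem~\ref{theo:mu_c_positive_amenable}, via a conservation-of-mass argument that uses a F\o lner sequence and does not apply to trees), and for trees establishes only a qualitative $\mu_c>0$ with an implicit threshold depending on both $\lambda$ and $d$. If the simple GW heuristic gave the stronger, cleaner bound on the tree, the authors would not have built the corruption/branching machinery. The heuristic you describe is the right intuition --- the cascade should be subcritical when $\mu$ is small --- but turning it into a proof requires either the paper's construction or something of comparable delicacy, and the present proposal does not supply it.
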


We now give a brief description of our proof techniques. The traditional strategy to establish bounds on $\mu_c$ is to consider a 
ball $B_L\subset V$ of some large radius $L$, centered at a given vertex $x\in V$, and \emph{stabilize} ARW inside this ball. This consists of
letting the process run (i.e., particles move and fall asleep) inside $B_L$, 
deleting every particle that exits $B_L$. This procedure will eventually end. At this point, each vertex of $B_L$ will either contain a 
sleeping particle or contain no particle; such a vertex is usually called \emph{stable}. 
It was shown in~\cite{Rolla}
that, roughly speaking, ARW is active if and only if the number of times particles visit $x$ during the stabilization of $B_L$ 
goes to infinity with $L$. In this paper, we introduce a 
new point of view on such stabilization procedure by focusing on some vertex $y \in B_L$, and carrying out what we call a \emph{weak stabilization of $B_L$
with respect to $y$}. Intuitively, in the weak stabilization we perform the steps of a stabilization procedure until each vertex of $B_L\setminus \{y\}$ is stable 
while $y$ is allowed to be either stable or host exactly one active particle. 
This strategy allows us to estimate the probability that, at the end of a 
stabilization procedure, $y$ contains a sleeping particle. In principle, the density of sleeping particles should correspond to $\mu_c$, and it is 
by controling such probability that we obtain estimates on $\mu_c$.
We believe that our weak stabilization procedure and our point of view of estimating the density of sleeping particles have the potential to foster 
even more substantial progress in this model. In fact, we believe our estimate on the probability that a sleeping particle ends at some vertex is 
of independent interest, and we state it in Theorem~\ref{theo:boundsQ}. 

The remaining of the paper is organized as follows. 
In Section~\ref{sec:Diaconis}, we describe the so-called Diaconis-Fulton representation of ARW and its properties, which we employ in all of our proofs.
Then, in Section~\ref{sec:weak_stabilization}, 
we introduce the weak stabilization procedure and estimate the probability of having a sleeping particle at a given vertex (Theorem~\ref{theo:boundsQ}).
Next we turn to the proofs of our main results: we 
prove 
Theorem~\ref{theo:transient_graphs} in Section~\ref{sec:proof_of_theorem_transient_graphs},
Theorem~\ref{theo:activity_non_amenable} in Section~\ref{sec:proof_of_theorem_activity_non_amenable},
Theorem \ref{theo:mu_c_positive_amenable} in Section \ref{sec:proof_of_theorem_mu_c_positive_amenable}, 
and Theorem~\ref{theo:tree} in Section~\ref{sec:proof_of_theo_tree}.

\section{Diaconis-Fulton representation}
\label{sec:Diaconis}
In this section we describe the
Diaconis-Fulton graphical representation
for the dynamics of ARW, following~\cite{Rolla}. 
For a graph $G=(V,E)$, the state of configurations is $\Omega=\{0,\rho,1,2,3,\ldots\}^V$, where a vertex being in state $\rho$ denotes that the vertex has 
one sleeping particle, while being in state $i\in\{0,1,2,\ldots\}$ denotes that the vertex contains $i$ active particles.
We employ the following order on the states of a vertex: $0 < \rho < 1<2<\cdots$.
In a configuration $\eta\in \Omega$,
a site $x \in V$ is called \textit{stable} if
$\eta(x) \in \{0, \rho \}$,
and it is called \textit{unstable} if $\eta(x) \geq 1$.
We fix an array of  \textit{instructions} 
$\tau = ( \tau^{x,j}: \, x \in V, \, j \in \mathbb{N})$,
where $\tau^{x,j}$ can either be of the form $\tau_{xy}$
or $\tau_{x\rho}$. We let $\tau_{xy}$ with $x,y\in V$ denote the instruction that a particle from $x$ jumps to vertex $y$, and $\tau_{x\rho}$ denote
the instruction that a particle from $x$ falls asleep.
Henceforth we call $\tau_{xy}$ a \emph{jump instruction} and $\tau_{x\rho}$ a \emph{sleep instruction}.
Therefore, given any configuration $\eta$, performing the instruction $\tau_{xy}$ in $\eta$ yields another configuration $\eta'$ such that 
$\eta'(z)=\eta(z)$ for all $z\in V\setminus\{x,y\}$, $\eta'(x)=\eta(x)-\ind{\eta(x)\geq 1}$, and $\eta'(y)=\eta(y)+\ind{\eta(x)\geq 1}$. We use the convention that $1+\rho=2$.
Similarly, performing the instruction $\tau_{x\rho}$ to $\eta$ yields a configuration $\eta'$ such that 
$\eta'(z)=\eta(z)$ for all $z\in V\setminus\{x\}$, and if $\eta(x)=1$ we have $\eta'(x)=\rho$, otherwise $\eta'(x)=\eta(x)$.

Let $h = ( h(x)\, : \,  x \in V)$ count the number of 
instructions used at each site.
We say that we \textit{use} an instruction
at $x$ (or that we \emph{topple} $x$) when we act on the current
particle configuration $\eta$ through the operator $\Phi_x$,
which is defined as,
\begin{equation}
\label{eq:Phioperator}
\Phi_x ( \eta, h) =
( \tau^{x, h(x) + 1}  \, \eta, \, h + q_x).
\end{equation}
The operation $\Phi_x$ is \textit{legal} for $\eta$ if $x$ is unstable in $\eta$, in which case we set $q_x=1$, 
otherwise it is \textit{illegal} and we set $q_x=0$.

\vspace{0.8cm}

\noindent \textit{\textbf{Properties.}}
We now describe the properties of this representation.
Later  we discuss how they are related to the the stochastic dynamics of ARW.
For a sequence of vertices $\alpha = ( x_1, x_2, \ldots x_k)$,
we write $\Phi_{\alpha} = \Phi_{x_k} \Phi_{x_{k-1}}
\ldots \Phi_{x_1}$ and we say that $\Phi_{\alpha}$ is
\textit{legal} for $\eta$ if $\Phi_{x_\ell}$
is legal for $\Phi_{(x_{\ell-1}, \ldots, x_1)} (\eta,h) $
for all $\ell \in \{ 1, 2, \ldots k \}$.
Let $m_{\alpha} = ( m_{\alpha}(x) \, : \,x \in  V )$
be given by,
 $m_{\alpha}(x) \, = \, \sum_{\ell} \ind{x_\ell = x},$
the number of times the site $x$ appears in $\alpha$.
We write $m_{\alpha} \geq m_{\beta}$ if
$m_{\alpha} (x)  \,  \geq \, m_{\beta} (x) \, \, \, \forall x \in V$.
Analogously we write $\eta'   \geq   \eta$ if $\eta' (x) \, \geq \, \eta(x)$
for all $x \in V$. We also write $(\eta', h') \geq (\eta, h)$
if $\eta' \geq \eta$ and $h' = h$.

Let $\eta, \eta'$ be two configurations, $x$ be a vertex in $V$
and  $\tau$ be an array of instructions. 
Let $V'$ be a finite subset of $V$. A configuration $\eta$ is said to be \textit{stable} in $V'$
if all the sites $x \in V'$ are stable. We say that $\alpha$ is contained in $V'$
if all its elements are in $V'$, and we say that $\alpha$ \textit{stabilizes} $\eta$ in $V'$
if every $x \in V'$ is stable in $\Phi_\alpha \eta$.
The following lemmas give fundamental properties of the Diaconis-Fulton representation. 
For the proof we refer to \cite{Rolla}.

\begin{lem}[Least Action Principle]\label{prop:lemma1}
   If $\alpha$ and $\beta$ are legal sequences of topplings for $\eta$ such that $\beta$ is contained in $V$
   and $\alpha$ stabilizes $\eta$ in $V$, then $m_{\beta} \leq m_{\alpha}$.
\end{lem}

\begin{lem}[Abelian Property]\label{prop:lemma2}
   Given any $V'\subset V$,
   if $\alpha$ and $\beta$ are both legal sequences for $\eta$
   that are contained in $V'$ and stabilize $\eta$ in $V'$, 
   then $m_{\alpha} = m_{\beta}$. In particular, $\Phi_{\alpha} \eta = \Phi_{\beta} \eta$.
\end{lem}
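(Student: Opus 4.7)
The plan is to prove the abelian property via a ``least action principle'' that compares any legal sequence to any legal stabilizing sequence. The core tools are a monotonicity lemma and a local exchange lemma; once these are in hand, the abelian property falls out by applying the least action principle symmetrically.

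First I would establish two auxiliary facts. (i) \textbf{Monotonicity under larger initial data:} if $\alpha$ is legal for $(\eta,h)$ and $\eta'\geq \eta$, then $\alpha$ is also legal for $(\eta',h)$ and $\Phi_\alpha\eta'\geq\Phi_\alpha\eta$. This is a routine induction on the length of $\alpha$: checking each of the three instruction types ($\tau_{xy}$ with $\eta(x)\geq 1$, $\tau_{x\rho}$ with $\eta(x)=1$, and $\tau_{x\rho}$ with $\eta(x)\geq 2$) shows that the order $0<\rho<1<2<\cdots$ is preserved coordinate-wise. (ii) \textbf{Local exchange:} if $\Phi_x$ and $\Phi_y$ are both legal at $(\eta,h)$ with $x\neq y$, then $\Phi_x\Phi_y(\eta,h)=\Phi_y\Phi_x(\eta,h)$ and both orders are legal. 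The counters $h(x)$ and $h(y)$ are independent so the instructions used are the same in either order, and instability at $x$ can only be strengthened by toppling $y$ (which can deposit a particle at $x$), hence legality is preserved.

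Next I would prove the \textbf{Least Action Principle (LAP):} if $\alpha$ is any legal sequence for $\eta$ contained in $V'$ and $\beta$ is a legal sequence that stabilizes $\eta$ in $V'$, then $m_\alpha\leq m_\beta$ coordinate-wise. The proof is by induction on the length of $\alpha$. Write $\alpha=\alpha'\cdot x$ and assume by induction $m_{\alpha'}\leq m_\beta$. If $m_{\alpha'}(x)<m_\beta(x)$, adding one at $x$ still gives $m_\alpha\leq m_\beta$. Otherwise $m_{\alpha'}(x)=m_\beta(x)$; using the local exchange lemma repeatedly I would reorder $\beta$ so that its topples at each site $z$ occur in any desired order, and then extract a prefix $\beta_0$ of a legal reordering of $\beta$ with $m_{\beta_0}(z)=m_{\alpha'}(z)$ for every $z$ and $m_{\beta_0}\leq m_\beta$. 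By the abelian property \emph{for equal-count sequences} (which itself follows directly from the local exchange lemma, since one can turn one such sequence into the other by swaps of adjacent unequal letters), $\Phi_{\beta_0}\eta=\Phi_{\alpha'}\eta$. But $\Phi_{\beta_0}\eta\leq\Phi_\beta\eta$ by monotonicity and the fact that the remaining topples of $\beta$ only increase the configuration at $x$ (they cannot decrease it, since $x$ is not toppled further), so $\Phi_{\alpha'}\eta(x)\leq \Phi_\beta\eta(x)\in\{0,\rho\}$, contradicting the legality of $\Phi_x$ at $\Phi_{\alpha'}\eta$.

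Finally I apply LAP with the roles of $\alpha$ and $\beta$ swapped to obtain $m_\alpha=m_\beta$. Because the instructions at each site are read off the fixed array $\tau$ in order $\tau^{x,1},\tau^{x,2},\ldots$, two sequences with the same count vector apply exactly the same multiset of instructions at every site; a further application of the local exchange lemma then yields $\Phi_\alpha\eta=\Phi_\beta\eta$. The step I expect to be the main obstacle is the exchange argument inside the LAP: getting a prefix $\beta_0$ with the correct count vector requires verifying that one can always swap an adjacent pair of legal topples at distinct sites, and one must be careful with the sleep instruction $\tau_{x\rho}$ because it acts nontrivially only when $\eta(x)=1$. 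However, the monotonicity lemma ensures that whenever the swap is well-defined on counters, the resulting sequence remains legal, so the obstacle is bookkeeping rather than a genuinely new idea.
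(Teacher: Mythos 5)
Your proposal is correct and follows the standard local-exchange plus least-action argument for abelian networks, which is precisely the route taken in the reference \cite{Rolla} to which the paper defers for this proof. One small caution on the parenthetical in your LAP step: the claim that the equal-count case follows ``directly\ldots by swaps of adjacent unequal letters'' is a little cavalier, since an arbitrary adjacent transposition need not keep the sequence legal; the standard fix (and the cleaner way to run the whole induction) is to peel off the first letter $z$ of the legal sequence, commute the first occurrence of $z$ in the other sequence to the front (which is legal since prior topples at sites $\neq z$ can only increase $\eta(z)$), cancel, and recurse.
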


For any finite subset $V' \subset V$, any $x\in V'$, any particle configuration $\eta$, and any array of instructions $\tau$, we denote by $m_{V^{\prime},\eta,\tau}(x)$ the number of times that $x$ is toppled in the stabilization of $V'$ starting from configuration $\eta$ and using the instructions in $\tau$. Note that by Lemma~\ref{prop:lemma2}, we have that $m_{V^{\prime},\eta,\tau}$ is well defined.
\begin{lem}[Monotonicity]\label{prop:lemma3}
   If $V' \subset V''\subset V$ and $\eta \leq \eta'$, then $m_{V', \eta, \tau} \leq m_{V'', \eta', \tau}$.
\end{lem}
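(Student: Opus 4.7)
The plan is to deduce the monotonicity from the Abelian Property (Lemma~\ref{prop:lemma2}) together with a pointwise monotonicity statement for the toppling operator $\Phi_x$. The idea is to take a legal stabilizing sequence $\alpha$ for $\eta$ in $V'$, verify that it remains legal when applied to the larger configuration $\eta'$, and then extend it to a legal stabilizing sequence for $\eta'$ in the larger set $V''$ without decreasing any toppling counts.

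The first step I would establish is a single-step monotonicity: if $\eta \leq \eta'$ and $\Phi_x$ is legal for $\eta$ (i.e., $\eta(x)\geq 1$), then $\Phi_x$ is legal for $\eta'$ and $\Phi_x(\eta,h) \leq \Phi_x(\eta',h)$. Legality is immediate from the order $0<\rho<1<2<\cdots$, since $\eta(x)\geq 1$ forces $\eta'(x)\geq 1$. Order-preservation is a short case analysis on the instruction $\tau^{x,h(x)+1}$: a jump instruction decreases the count by one at $x$ and increases it by one at a neighbor $y$ in both configurations, preserving the order. The only subtle case is a sleep instruction with $\eta(x)=1$ and $\eta'(x)\geq 2$, where the left-hand side becomes $\rho$ while the right-hand side is unchanged; here the order is preserved because $\rho<2\leq \eta'(x)$. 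A straightforward induction on the length of a sequence then shows that any legal sequence for $\eta$ is also legal for $\eta'$ and that the corresponding intermediate configurations remain ordered.

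With this in hand, pick a legal sequence $\alpha$ that stabilizes $\eta$ in $V'$; by Lemma~\ref{prop:lemma2}, $m_\alpha = m_{V',\eta,\tau}$. By the previous paragraph, $\alpha$ is legal for $\eta'$, but the resulting configuration need not be stable in $V''$: vertices in $V''\setminus V'$ may be unstable, and vertices in $V'$ may have accumulated extra particles from $\eta'$ and become unstable again. I then extend $\alpha$ by successively toppling unstable sites inside $V''$, producing a sequence $\gamma$ of legal operations contained in $V''$, until the configuration becomes stable in $V''$. The concatenation $\beta=\alpha\gamma$ is a legal stabilizing sequence for $\eta'$ in $V''$, and clearly $m_\beta\geq m_\alpha$ pointwise. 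A second application of the Abelian Property gives $m_\beta=m_{V'',\eta',\tau}$, which yields the claimed inequality.

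The main obstacle is the single-step monotonicity for the sleep instruction: because $\rho<1$ in the ordering, a legal toppling can numerically lower the state at a vertex, and one must confirm that domination by $\eta'$ survives this decrease in every subcase. The other minor point is that the extension $\gamma$ may fail to exist if stabilization of $\eta'$ in $V''$ does not terminate, but in that case $m_{V'',\eta',\tau}(x)=+\infty$ at some site and the inequality is trivial, so this does not cause any difficulty.
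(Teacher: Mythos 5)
Your proof is correct and follows the standard argument of Rolla and Sidoravicius, which is exactly the source the paper cites for this lemma rather than reproducing a proof. The single-step monotonicity of $\Phi_x$ (with the case analysis distinguishing jump from sleep, and the key observation that $a\mapsto a+1$ and the sleep map are weakly order-preserving on $\{0<\rho<1<2<\cdots\}$), the induction propagating it along a legal sequence while keeping $h$ synchronized, and the extension of $\alpha$ to a stabilizing sequence $\beta=\alpha\gamma$ for $(\eta',V'')$ followed by two applications of the Abelian property constitute precisely the standard route. The only place that deserves slightly more care is your final remark on termination: you address the possibility that the extension $\gamma$ fails to terminate, but the sequence $\alpha$ you ``pick'' at the outset likewise need not exist if stabilization of $\eta$ in $V'$ itself does not terminate for the fixed $\tau$; this is handled by the same mechanism (every legal sequence for $\eta$ contained in $V'$ is legal for $\eta'$ in $V''$, so $m_{V'',\eta',\tau}$ dominates the supremum defining $m_{V',\eta,\tau}$), and the whole argument is phrased most cleanly with the least-action form of the Abelian property rather than requiring a stabilizing sequence on the left. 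This is a cosmetic point, not a gap.
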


By monotonicity, given any growing sequence of subsets $V_1\subseteq V_2 \subseteq V_3\subseteq \cdots \subseteq V$ such that $\lim_{m\to\infty} V_m=V$, 
the limit 
$$
   m_{\eta, \tau} = \lim\limits_{m\to \infty} m_{V_m, \eta, \tau},
$$ 
exists and does not depend
on the particular sequence $\{V_m\}_m$.

We now introduce a probability measure on the space of instructions and of particle configurations.
We denote by $\mathcal{P}$ the probability measure according to which,
for any $x \in V$ and any $j \in \mathbb{N}$,
$\mathcal{P} (  \tau^{x,j} = \tau_{x\rho}   ) = \frac{\lambda}{1 + \lambda}$ and 
$\mathcal{P} (  \tau^{x,j} = \tau_{xy}   ) = \frac{1}{d(1 + \lambda)}$ for any $y\in V$ neighboring $x$,
where $d$ is the degree of each vertex of $G$ and the $\tau^{x,j}$ are independent across diffent values of $x$ or $j$.
Finally we denote by $P^\nu=\mathcal{P}\otimes \nu$ the joint law of
$\eta$ and $\tau$, where $\nu$ is a distribution on $\Omega$ giving the law of $\eta$.
Let $\mathbb{P}^\nu$ denotes the probability measure induced by the ARW process when the initial distribution of particles is given by $\nu$. 
We shall often omit the dependence on $\nu$ by writing $P$ and $\mathbb{P}$ instead of $P^\nu$ and $\mathbb{P}^\nu$.
The following lemma relates the dynamics of ARW to the stability property of the representation.
\begin{lem}[0-1 law]
   \label{prop:lemma4}
   Let $\nu$ be an automorphism invariant, ergodic distribution with finite density.
   Let $x\in V$ be any given vertex of $G$.
   Then $\mathbb{P}^{\nu}  (\text{ARW fixates} ) = P^{\nu} ( m_{\eta, \tau} (x) < \infty ) \in \{0, 1 \}$.
\end{lem}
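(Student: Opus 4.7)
The plan is to decompose the statement into two parts: first, establish the identity $\mathbb{P}^\nu(\text{ARW fixates}) = P^\nu(m_{\eta,\tau}(x) < \infty)$ by coupling the continuous-time ARW dynamics with the Diaconis–Fulton representation; and second, show that $P^\nu(m_{\eta,\tau}(x)<\infty)\in\{0,1\}$ by arguing that this event is invariant under graph automorphisms while $P^\nu$ is ergodic under the automorphism action.

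For the coupling I would follow the standard site-wise construction of Rolla and Sidoravicius: place at each vertex $x$ a Poisson clock of rate $1+\lambda$, at each ring consume the next instruction from the stack $(\tau^{x,j})_j$, and execute it only if $x$ is currently unstable. Matching the sleep probability $\lambda/(1+\lambda)$ with the ARW sleep rate $\lambda$ relative to the jump rate $1$, and using the Abelian property (Lemma \ref{prop:lemma2}) to reorder topplings, one checks that this site-wise dynamics fixates precisely when the particle-wise ARW fixates, and that the total number of legal topplings at $x$ carried out in the infinite-time limit coincides with $m_{\eta,\tau}(x)$ as defined through the monotone exhaustion in Lemma \ref{prop:lemma3}. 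In particular, ARW fixates iff $m_{\eta,\tau}(x)<\infty$ for some (equivalently, every) vertex $x$.

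The key observation for the $0$–$1$ law is that the event $F_x:=\{m_{\eta,\tau}(x)<\infty\}$ does not depend on $x$: if $m_{\eta,\tau}(x)=\infty$, then at $x$ infinitely many instructions are consumed, of which, by the strong law of large numbers applied to the i.i.d.\ stack at $x$, a positive fraction are jumps toward any given neighbor $y$. Hence $y$ receives infinitely many particles and must itself be toppled infinitely often, so $m_{\eta,\tau}(y)=\infty$, and by connectedness of $G$ this propagates to all vertices. Therefore $F_x$ is invariant under the diagonal action of $\mathrm{Aut}(G)$ on pairs $(\eta,\tau)$. Since $\mathcal{P}$ is an i.i.d.\ product measure (hence mixing under $\mathrm{Aut}(G)$) and $\nu$ is ergodic and automorphism invariant, the joint law $P^\nu=\mathcal{P}\otimes\nu$ is ergodic, whence $P^\nu(F_x)\in\{0,1\}$.

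The main obstacle is the rigorous justification of the coupling in the infinite-volume continuous-time setting, namely interchanging the temporal limit $t\to\infty$ with the spatial limit $V_m\uparrow V$ used to define $m_{\eta,\tau}$. The finite-density assumption on $\nu$ prevents pathologies from particles at infinity, and monotonicity (Lemma \ref{prop:lemma3}) lets one pass to the limit on both sides; what requires care is controlling, over any finite time window, the propagation of activity from far away and ruling out an accumulation of pending clock rings that would invalidate the identification of the site-wise and particle-wise clocks, which is handled by standard Borel–Cantelli estimates using the product structure of $\tau$.
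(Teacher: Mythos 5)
The paper does not reproduce a proof of this lemma but refers to~\cite{Rolla}, and your proposal is a faithful reconstruction of the argument there: the same decomposition into (i) a coupling of the continuous-time dynamics with the Diaconis--Fulton site-wise representation using the Abelian property, and (ii) a propagation argument plus ergodicity of $\mathcal{P}\otimes\nu$ under $\mathrm{Aut}(G)$ giving the $0$--$1$ law. One compressed step worth spelling out: ``$y$ receives infinitely many particles and must itself be toppled infinitely often'' relies on the fact that a stable configuration holds at most one particle per site, so if the number of particles entering $y$ during the stabilization of $V_m$ diverges as $V_m\uparrow V$, then $m_{V_m}(y)\to\infty$; this, together with the strong law applied to the i.i.d.\ stack at $x$, makes the event $\{m_{\eta,\tau}(x)=\infty\}$ a.s.\ independent of $x$ and hence automorphism-invariant, as you claim.
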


Roughly speaking, the next lemma gives that removing a sleep instruction, cannot decrease the number of instructions 
used at a given vertex for stabilization.
In order to state the lemma, consider an additional instruction $\iota$ besides $\tau_{xy}$ and $\tau_{x\rho}$. The effect of $\iota$ is to 
leave the configuration unchanged; i.e., $\iota \, \eta = \eta$.  
Then given two arrays $\tau = \left( \tau^{x,j} \right)_{x ,\, j }$ 
and $\tilde{\tau} = \left( \tilde{\tau}^{x,j} \right)_{x, \, j }$,
we write $\tau \leq \tilde{\tau}$ if for every $x \in V$ and $j \in \mathbb{N}$,
we either have $\tilde{\tau}^{x,j} = {\tau}^{x,j}$ or we have $\tilde{\tau}^{x,j} = \iota$ and 
${\tau}^{x,j} =  \tau_{x\rho}$.

\begin{lem}[Monotonicity with enforced activation]
   \label{prop:lemma5}
   Let $\tau$ and $\tilde{\tau}$ be two arrays of instructions such that $\tau \leq \tilde{\tau}$.
   Then, for any finite subset $V' \subset V$ and configuration $\eta \in \Omega$,  we have
   $m_{V', \eta, \tau} \leq m_{V', \eta, \tilde{\tau}}.$
\end{lem}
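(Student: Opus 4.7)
The plan is to compare stabilization under $\tau$ and $\tilde{\tau}$ by reusing a $\tau$-stabilizing sequence as a legal prefix under $\tilde{\tau}$, and then appealing to the Abelian property. Fix a legal sequence $\alpha = (x_1,\ldots,x_k)$ contained in $V'$ that stabilizes $\eta$ in $V'$ under $\tau$. Write $\eta_i$ for the configuration after the first $i$ topplings of $\alpha$ interpreted under $\tau$, and $\tilde{\eta}_i$ for the configuration after the same first $i$ topplings (with the same site sequence) interpreted under $\tilde{\tau}$. Crucially, because the same sites are toppled in the same order in both evolutions, the counter arrays $h$ coincide at every step, so at step $i$ both evolutions consult the instruction indexed by $(x_i, j)$ for the same $j$.

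The core step is an induction showing that $\alpha$ is also legal under $\tilde{\tau}$ and that $\tilde{\eta}_i \succeq \eta_i$ pointwise in the total order $0 < \rho < 1 < 2 < \cdots$. A short case analysis handles the inductive step. If $\tau^{x_i,j} = \tilde{\tau}^{x_i,j}$ is a jump $\tau_{xy}$, both evolutions move one particle from $x_i$ to $y$ and the ordering is preserved (using the convention $1+\rho = 2$ when the destination sat at $\rho$). If both are $\tau_{x\rho}$, then either both sites contain exactly one active particle (both become $\rho$), or at least one contains two or more (both unchanged), or $\eta_{i-1}(x_i) = 1$ while $\tilde{\eta}_{i-1}(x_i) \geq 2$, in which case $\eta_i(x_i) = \rho < 2 \leq \tilde{\eta}_i(x_i)$. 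Finally, if $\tau^{x_i,j} = \tau_{x\rho}$ but $\tilde{\tau}^{x_i,j} = \iota$, the $\tau$-evolution can only weaken the configuration at $x_i$ (replacing a $1$ with a $\rho$, or not changing it), whereas the $\tilde{\tau}$-evolution leaves it untouched, so $\tilde{\eta}_i \succeq \eta_i$ again.

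With the induction in hand, $\alpha$ is legal under $\tilde{\tau}$ and the toppling count $m_\alpha$ is the same in either interpretation. Extend $\alpha$ by any further legal topplings in $V'$ under $\tilde{\tau}$ until $V'$ becomes stable; call the resulting sequence $\tilde{\alpha}$. Each appended toppling only increases the componentwise count, so $m_{\tilde{\alpha}}(x) \geq m_\alpha(x)$ for every $x$. By the Abelian property (Lemma~\ref{prop:lemma2}) applied under $\tilde{\tau}$, $m_{\tilde{\alpha}} = m_{V',\eta,\tilde{\tau}}$, and we conclude
\[
   m_{V',\eta,\tilde{\tau}}(x) \;=\; m_{\tilde{\alpha}}(x) \;\geq\; m_\alpha(x) \;=\; m_{V',\eta,\tau}(x)
\]
for every $x \in V$.

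The main obstacle is the bookkeeping in the inductive case analysis, particularly the interaction between the symbolic state $\rho$ and the numerical states across the two evolutions, and the need to use the shared counter $h$ to justify that comparable instruction slots are being consulted. A secondary subtlety is that finite stabilization under $\tilde{\tau}$ may fail (for instance if enough sleeps are replaced by $\iota$ that a handful of particles can be toppled indefinitely); this is handled by interpreting $m_{V',\eta,\tilde{\tau}}$ as the monotone supremum over legal sequences, so that the desired inequality is trivial in the cases where the right-hand side is infinite.
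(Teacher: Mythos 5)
Your proof is correct and reconstructs the standard argument: the paper itself does not prove this lemma but refers the reader to Rolla--Sidoravicius, and what you have written is precisely the coupling-along-a-common-sequence approach used there. One small remark on the "secondary subtlety" at the end: the case that actually requires care is when the \emph{left}-hand side $m_{V',\eta,\tau}$ is infinite (no finite $\tau$-stabilizing sequence exists), since your main argument begins by fixing a stabilizing $\alpha$; your induction already shows that every finite $\tau$-legal sequence is $\tilde{\tau}$-legal, so the inequality does follow once $m$ is taken to be the supremum of $m_\alpha$ over legal sequences, but you should state this explicitly rather than only noting that the inequality is trivial when the right-hand side is infinite.
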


When we average over $\eta$ and $\tau$ using the measure $P$, we will simply write $m_{V'}$ instead of $m_{V',\eta,\tau}$.

\section{Weak stabilization}\label{sec:weak_stabilization}
In this section we introduce our method of weak stabilization and use it to derive upper and lower bounds on the probability that a given vertex contains an $S$-particle at the end of the stabilization of some set. This is the content of Theorem \ref{theo:boundsQ} below, which will play a fundamental role in the proofs of our main results.
For any finite set $ K \subset V$ and any vertex $x \in K$, let $Q(x, K)$ be the probability that there is one $S$-particle at $x$ at the end of
the stabilization of $K$.
\begin{theorem}
\label{theo:boundsQ}
Consider ARW on a vertex-transitive graph $G=(V, E)$.
Then, for any $K\subset V$ and any $x\in K$, we have
\begin{equation}
   Q(x,K) \geq \frac{\lambda}{1+\lambda} P( m_{K}(x) \geq 1).
   \label{eq:lbound}
\end{equation}
Moreover, if $G$ is a vertex-transitive, transient graph, then 
\begin{equation}
   Q(x,K)\leq 
      3\sqrt{\lambda\lr{C_G(1+\lambda)+1}},
   \label{eq:ubound}
\end{equation}
where $C_G$ is the expected number of times a simple random walk on $G$ starting from $x$ visits $x$.
\end{theorem}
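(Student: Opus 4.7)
The plan is to base both bounds on a common construction, the \emph{weak stabilization of $K$ with respect to $x$}: a toppling sequence that topples every vertex of $K\setminus\{x\}$ whenever it is unstable, but topples $x$ only when its count is at least $2$. Although $x$ is not driven all the way to a stable state, this prescription is restrictive enough for an analogue of the abelian property (Lemma~\ref{prop:lemma2}) to hold, and it terminates in a well-defined configuration in which $K\setminus\{x\}$ is stable and $x\in\{0,1,\rho\}$, with the instruction counts at each vertex depending only on $(\eta,\tau)$.

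For the lower bound~\eqref{eq:lbound}, a short case analysis shows that, up to null events, $\{m_K(x)\geq 1\}$ coincides with the event that $x$ ends the weak stabilization in state $1$: if $x$ ends at $0$ or $\rho$ then no further topples at $x$ are required (so $m_K(x)=0$), while otherwise exactly one more topple at $x$ is needed in order to finish stabilizing $K$. On this latter event the additional topple consumes a \emph{fresh} instruction $\tau^{x,\cdot}$, independent of the output of the weak stabilization, which is a sleep with probability $\lambda/(1+\lambda)$; if it is a sleep then $x$ becomes $\rho$ and cannot be disturbed later, because $K\setminus\{x\}$ is already stable. This yields $Q(x,K)\geq \frac{\lambda}{1+\lambda}\,P(m_K(x)\geq 1)$.

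For the upper bound~\eqref{eq:ubound} in the transient case, I would iterate the construction: if the post-weak-stabilization topple at $x$ was a jump (probability $1/(1+\lambda)$), we perform a second weak stabilization of $K$ with respect to $x$, topple $x$ again if it now ends at $1$, and continue in this fashion. Writing $T$ for the total number of such post-weak-stabilization topples at $x$, the independence of successive fresh Bernoulli instructions at $x$ yields the exact identity
\begin{equation*}
  Q(x,K)\;=\;\frac{\lambda}{1+\lambda}\,E[T].
\end{equation*}

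The main obstacle is to bound $E[T]$ by a quantity of the correct order. My plan would be to couple the iterated procedure with a simple random walk by using Lemma~\ref{prop:lemma5} to compare with the ``no-sleep'' dynamics, in which each sleep instruction at $x$ is replaced by the identity $\iota$. In this reference process each jump at $x$ produces a genuine random-walk step, and the expected number of subsequent returns to $x$ of the resulting tagged particle is governed by the Green's function $C_G$. Combining such a Green's-function estimate with the Bernoulli sleep probability via a Cauchy--Schwarz argument --- using $Q(x,K)^2\leq \bigl(\frac{\lambda}{1+\lambda}\bigr)^2 E[T^2]$ together with a second-moment estimate of the form $E[T^2]\leq \frac{9(1+\lambda)^2\bigl(C_G(1+\lambda)+1\bigr)}{\lambda}$ --- then gives the stated upper bound $Q(x,K)\leq 3\sqrt{\lambda(C_G(1+\lambda)+1)}$.
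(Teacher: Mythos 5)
Your weak-stabilization setup and lower-bound argument follow the paper's own route: iterate weak stabilizations of $(x,K)$, observe that $\{m_K(x)\ge1\}$ is exactly the event that the first weak stabilization leaves an active particle at $x$, and then use that the next instruction at $x$ is fresh. That part is fine.

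For the upper bound your exact identity $Q(x,K)=\frac{\lambda}{1+\lambda}\,E[T]$ is correct (and is actually sharper than what the paper records): conditioned on $\{T\ge k\}$ and on which index the $k$-th post-weak-stabilization topple will consume, that instruction at $x$ is unread, so $P(T\ge k,\ \text{$k$-th topple is sleep})=\frac{\lambda}{1+\lambda}P(T\ge k)$, and summing over $k$ gives the identity. The paper never states this equality; it only uses the one-sided geometric bound $P(T_{(x,K)}=k,\ \eta'(x)=\rho)\le(\tfrac{1}{1+\lambda})^{k-1}\tfrac{\lambda}{1+\lambda}$, truncates at some $H$, and uses Markov's inequality on the tail via $E[T_{(x,K)}]\le C_G(1+\lambda)+1$ (Lemma~\ref{lem:expt}). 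But precisely because you have the identity, the correct next move is simply to plug in the \emph{first-moment} bound $E[T]\le E[T_{(x,K)}]\le C_G(1+\lambda)+1$ — exactly the Green's-function estimate you describe, obtained via Lemma~\ref{prop:lemma5} — which already yields $Q(x,K)\le\frac{\lambda}{1+\lambda}\bigl(C_G(1+\lambda)+1\bigr)$, a bound that is cleaner than, and for small $\lambda$ stronger than, the claimed $3\sqrt{\lambda(C_G(1+\lambda)+1)}$. The Cauchy--Schwarz detour you propose instead is where the actual gap lies: the step $E[T^2]\le\frac{9(1+\lambda)^2(C_G(1+\lambda)+1)}{\lambda}$ is asserted with no derivation. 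It does not follow from the no-sleep coupling, which naturally controls only $E[T]$; a second-moment bound of that order would need a separate argument combining the first-moment estimate with the geometric tail $P(T\ge k)\le(\tfrac{1}{1+\lambda})^{k-1}$ and a careful truncation, and even then one has to check that the resulting constant works (a naive split produces an extra logarithmic factor). As written, the proof hinges on this unproved inequality, while the ingredients you already have — the exact identity plus the first-moment bound — render the whole Cauchy--Schwarz step unnecessary.
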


In the proof of the theorem above we will employ the notion of \emph{weakly stable configurations} and \emph{weak stabilization}.

\begin{definition}[weakly stable configurations]\label{def:wstable}
	We  say that a configuration $\eta$ is \emph{weakly stable} in a subset $K\subset V$ with respect to a vertex $x\in K$ 
	if $\eta(x)\leq 1$ and $\eta(y)\leq\rho$ for all $y\in K\setminus\{x\}$. 
	In words, this means that all vertices in $K\setminus \{x\}$ are stable, and $x$ is either stable or hosts at most one active particle.
	For conciseness, we just write that $\eta$ is weakly stable for $(x, K)$.
\end{definition}

\begin{definition}[weak stabilization]
   Given a subset $K\subset V$ and a vertex $x\in K$, the \emph{weak stabilization} of $(x,K)$ is a sequence of topplings of unstable sites of $K\setminus\{x\}$ and of topplings of $x$ whenever $x$ has at least two active particles, until a weakly stable configuration for $(x,K)$ is obtained. The order of the topplings of a weak stabilization can be arbitrary.
\end{definition}

The notion of {legal instructions} can be extended to weak stabilization of $(x,K)$ as follows.
We call a vertex $y$ unstable in $\eta \in \Omega$ if $\eta(y) \geq 1 + \delta_x(y)$,
where $\delta_x(y)=1$ if $x=y$ and $\delta_x(y)=0$ otherwise.
We call the operation $\Phi_y$ defined in (\ref{eq:Phioperator})
\textit{legal} for $\eta$ if $y$ is unstable in $\eta$ \textit{illegal}
otherwise.

\begin{remark}
\label{remark:weakstab}
Consider any finite subset $K \subset V$ and any $x \in K$. The Least Action Principle (Lemma \ref{prop:lemma1}), the Abelian property (Lemma \ref{prop:lemma2}), Monotonicity (Lemma \ref{prop:lemma3}), and Monotonicity with enforced activation (Lemma \ref{prop:lemma5}) hold true for weak stabilization of $(x,K)$ as well with no change in the proof.
\end{remark}



The main idea of the proof of Theorem~\ref{theo:boundsQ} is to 
perform a certain sequence of topplings to stabilize $K$ that will allow us to control whether there is a sleeping particle at $x$.
From the Abelian property (Lemma~\ref{prop:lemma2}), in order to stabilize $K$ we can perform the topplings in any order we want. 
We will stabilize $K$ by first weakly stabilizing $(x,K)$, which gives a weakly stable configuration $\eta_1$ for $(x,K)$. Then either $\eta_1$ is stable for $K$, in which case we finish the stabilization procedure, or $\eta_1(x)=1$. In the latter case, we topple $x$ and weakly stabilize $(x,K)$ again, obtaining a configuration $\eta_2$. We repeat the above procedure until we obtain a stable configuration for $K$, concluding the stabilization. We will refer to this stabilization procedure as a \emph{stabilization via weak stabilization}.

\subsection{Proof of the lower bound in Theorem~\ref{theo:boundsQ}}
Note that, in a stabilization via weak stabilization, after each weakly stable configuration $\eta_i$ we obtain, 
if $\eta_i$ is not stable, then with probability $\frac{\lambda}{1 + \lambda}$ we encounter a sleep instruction at $x$, transforming $\eta_i$ into a stable configuration.
With this we can derive the lower bound~\eqref{eq:lbound} in Theorem~\ref{theo:boundsQ}.

\begin{proof}[Proof of~\eqref{eq:lbound} in Theorem~\ref{theo:boundsQ}]
   We apply the stabilization of $K$ via weak stabilizations of $(x,K)$. Let $\eta_1$ be the first weakly stable configuration for $(x,K)$ that is obtained in this procedure.
	As discussed above, if $\eta_1$ is not stable for $K$, then we obtain a stable configuration for $K$ if the next instruction at $x$ is sleep. Hence, 
	$$
	   Q(x,K) \geq P(\eta_1 \text{ is not stable for $K$}) \frac{\lambda}{1+\lambda}.
	$$
	The proof is concluded by noting that the event that $\eta_1$ is not stable for $K$ is equivalent to the event that $x$ is toppled at least once. 
	This is true because of the following. 
	If $\eta_1$ is not stable for $K$, then $\eta_1(x)=1$ which implies that $x$ will be toppled at least once. 
	In the other direction, if $x$ is toppled at least once, then 
	this happens either before $\eta_1$ is obtained or because $\eta_1(x)=1$. 
	But if $x$ was toppled before $\eta_1$ was obtained, this must have happened at a time when $x$ had at least
	two particles. From this time onwards, $x$ will have at least one active particle until $\eta_1$ is obtained. Hence, $\eta_1$ is not stable.
\end{proof}

\subsection{Proof of the upper bound in Theorem~\ref{theo:boundsQ}}
Our proof of the upper bound~\eqref{eq:ubound} for $Q(x,K)$ is a bit longer than the proof of the lower bound. 
We will perform the stabilization of $K$ via weak stabilization as described above.
The idea is to estimate the probability that, for any $i\geq1$, we obtain a stable configuration for $K$ after the $i$th weak stabilization of $(x,K)$. We do this by relating this probability to the probability that a random walk starting from $x$ never returns to $x$. It is at this step that we use that $G$ is transient.

After the $i$th time we perform the weak stabilization of $(x,K)$, we let $m^{i}_{(x,K)}(y)$ be the number of instructions
that have been used at $y \in K$ up to this time, and denote by $\eta_i$ the configuration we then obtained. 
Also, let $T_{(x,K)}$ denote the number of weak stabilizations of $(x,K)$ we perform until a stable configuration in $K$ is obtained.
Note that $\eta_{T_{(x,K)}}$ is either a stable configuration, which implies that $\eta_{T_{(x,K)}}(x)=0$, or $\eta_{T_{(x,K)}}$ is weakly stable for $(x,K)$ with $\eta_{T_{(x,K)}}(x)=1$ and the next 
instruction used at $x$ was a sleep instruction, thereby concluding the stabilization of $K$.
For consistency, for any $i > T_{(x, K)}$, let $\eta_i$ be the stable configuration obtained after stabilizing $K$ and, for any $y \in K$, define  $m^{i}_{(x, K)}(y)=m_K(y)$, which is 
the total number of instructions used
at $y$ for the complete stabilization of $K$.
By the Abelian property, the quantities $T_{(x,K)}$ and $m^i_{(x,K)}$ are all well defined.

Below we state a lemma, and then show how this lemma implies the upper bound on $Q(x,K)$.
\begin{lem}\label{lem:expt}
   Given any vertex-transitive, transient graph $G=(V,E)$, any subset $K\subset V$ and any vertex $x\in K$, 
   and letting $C_G$ be the expected number of visits to $x$ of a random walk on $G$ starting from $x$, we have
$$
   E[T_{(x,K)}] \leq C_G (1 + \lambda)+1,
$$
where the expectation is with respect to the measure $P$.
\end{lem}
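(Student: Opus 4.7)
The plan is to analyze the stabilization via weak stabilization round by round and dominate $T_{(x,K)}-1$, the number of between-stabilization topplings at $x$, by a geometric random variable with an explicit parameter. First I would observe that each between-stabilization toppling consumes exactly one instruction at $x$, always at a moment when $\eta_i(x)=1$. With probability $p:=\lambda/(1+\lambda)$ this instruction is a sleep, which immediately produces the stable configuration $\eta_{i+1}(x)=\rho$ and terminates the stabilization. With probability $1-p$ it is a jump, sending a particle to a uniformly random neighbor $y$ of $x$; the procedure then continues to round $i+2$ precisely when some particle arrives back at $x$ during the subsequent weak stabilization.

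The heart of the proof is to show that, conditionally on the jump, this return probability is at most $R_G:=1-1/C_G$, the return probability of simple random walk on $G$. I would attempt to combine the Abelian property (Lemma~\ref{prop:lemma2}) with monotonicity with enforced activation (Lemma~\ref{prop:lemma5}): by the Abelian property I reorder the topplings of the weak stabilization so as to follow the trajectory of the just-jumped particle starting from $y$, and by Lemma~\ref{prop:lemma5} I dominate the ARW dynamics along this trajectory by a pure random walk, obtained by turning every sleep instruction into the no-op $\iota$. Averaging the resulting random-walk hitting probability $P_y(\text{hit }x)$ over the uniformly chosen neighbor $y$ of $x$ gives exactly $\frac{1}{d}\sum_{y\sim x}P_y(\text{hit }x)=R_G$, as desired.

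I expect the main obstacle to lie precisely in this reduction to pure random walk in the presence of the $\rho$-sites of $\eta_i$. The walking particle may visit a $\rho$-site and wake the sleeping particle there, potentially producing a cascade of newly active particles, each of which could reach $x$; a priori this could push the return probability above $R_G$, and so the cascades have to be tracked and bounded using the monotonicity arguments of Section~\ref{sec:Diaconis}. Granted the bound $q\le R_G$ on the return probability, each round beyond the first terminates with probability at least $1-(1-p)R_G=1-R_G/(1+\lambda)$, so $T_{(x,K)}-1$ is stochastically dominated by a geometric variable with this success probability. A direct computation using $R_G=1-1/C_G$ then gives
\[
E\bigl[T_{(x,K)}\bigr]\ \le\ 1+\frac{1}{1-R_G/(1+\lambda)}\ =\ 1+\frac{(1+\lambda)C_G}{1+\lambda C_G}\ \le\ C_G(1+\lambda)+1,
\]
as required.
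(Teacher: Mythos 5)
Your approach differs fundamentally from the paper's: you try to dominate $T_{(x,K)} - 1$ round-by-round by a geometric random variable, whereas the paper controls the total excess topplings $m_K(x) - m^1_{(x,K)}(x)$ via the pointwise inequality $T_{(x,K)} - 1 \leq m_K(x) - m^1_{(x,K)}(x)$ together with a coupling argument. Unfortunately your key step, the claim that the per-round return probability is at most $R_G$, is not established, and I do not believe it holds. The difficulty you identify --- cascades of woken particles --- is indeed the crux, and the monotonicity tools of Section~\ref{sec:Diaconis} do not resolve it in the way you suggest: Lemma~\ref{prop:lemma5} only says that suppressing sleep instructions \emph{increases} topplings, which does not let you reduce the probability that \emph{some} particle hits $x$ during a weak stabilization round to the hitting probability of the single just-jumped particle. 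Concretely, if $\eta_i(y)=\rho$ at the neighbor $y$ that the particle jumps to, the jump immediately produces two active particles at $y$, and the probability that at least one of them eventually visits $x$ can strictly exceed $R_G$; with many sleeping particles near $x$ in $\eta_i$, the conditional return probability can be pushed arbitrarily close to $1$. So the conditional bound is false in the worst case and the geometric stochastic domination is not available.

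The paper sidesteps the round-by-round analysis entirely. It first observes that each round past the first consumes at least one instruction at $x$ that is not counted in $m^1_{(x,K)}(x)$, giving $T_{(x,K)} - 1 \leq m_K(x) - m^1_{(x,K)}(x)$. It then shows $E[m_K(x) - m^1_{(x,K)}(x)] \leq C_G(1+\lambda)$ by adding one auxiliary particle at $x$, letting it walk out of $K$ with sleep instructions along its trajectory suppressed (costing $\mathcal{V}=C_G(1+\lambda)$ expected topplings at $x$, with Lemma~\ref{prop:lemma5} guaranteeing this is an upper bound on the true count), and comparing the leftover stabilization against the first weak stabilization via Lemma~\ref{lem:m}. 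The cascading effect you worry about is absorbed into that single global comparison between $m_K(x)$ and $m^1_{(x,K)}(x)$ rather than being controlled separately in each round --- that is exactly what makes the argument close.
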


\begin{proof}[Proof of~\eqref{eq:ubound} in Theorem~\ref{theo:boundsQ}]
   For simplicity, write $\eta'=\eta_{T_{(x,K)}+1}$ for the configuration obtained after complete stabilization of $K$.
   Then the following expression holds, as the sum is over disjoint events,
	\begin{align}
		\label{eq:sumoverk}
		Q(x,K) = P(\eta'(x)=\rho)=
		 \sum\limits_{k=1}^{\infty} 
		P \left(  T_{(x,K)} = k, \, \, \eta'(x) = \rho \right).
	\end{align}
	 Now observe that
	\begin{equation}
		\label{eq:inter}
		 P \left(   T_{(x,K)} = k,  \, \, \,   \eta'(x) = \rho   \right) 
		\leq \left( \frac{1}{1+\lambda} \right)^{k-1} \frac{\lambda}{1+ \lambda}.
	\end{equation}
	The previous inequality follows from independence of instructions: the event in the left-hand side implies that 
	after each weak stabilization we have an active particle at $x$, and moreover we encounter a jump instruction at $x$ after each of the first $k-1$ weak stabilizations, and a 
	sleep instruction at $x$ after the last weak stabilization.
	Hence, for any $H\geq 1$ we can write
	\begin{align*}
		Q(x,K)
	    & \leq   \frac{\lambda}{1+\lambda}\sum\limits_{k=1}^{H} {\left(  \frac{1}{1 +\lambda  } \right)}^{k-1}   \, + \,  P( T_{(x,K)} > H) \\
	    & \leq 1 - \left( \frac{1}{1+\lambda}\right) ^{H} + \frac{ E[T_{(x,K)}]}{H},
	\end{align*}
	where in the last step we used Markov's inequality.
	From Lemma~\ref{lem:expt}, we obtain 
	\begin{align*}
	   Q(x,K) 
	   & \leq  1 - \left( \frac{1}{1+\lambda}\right) ^{H} + \frac{C_G(1 + \lambda)+1}{H} \\
      & \leq 1 - \lr{1-\lambda}^{H} + \frac{C_G(1 + \lambda)+1}{H} \\
      & \leq \lambda H + \frac{C_G (1+ \lambda) +1}{H}.
	\end{align*}
Observe that our estimate holds for any integer $H \geq 1$ and that $C_G \geq 1$. Then, by setting $H= \lfloor \sqrt{\frac{C_G(1+\lambda)+1}{\lambda}}  \rfloor$,  we get that for any positive $\lambda$,
	$$
	   Q(x,K) 
	   \leq  	   3 \sqrt{\lambda\lr{C_G(1+\lambda)+1}}.
	$$
	In the above calculations we used  $\frac{x}{ \lfloor x \rfloor} \leq 2$ for $x \geq 1$. 
\end{proof}

\subsection{Proof of Lemma~\ref{lem:expt}}
In this section we establish the upper bound on $E[T_{(x,K)}]$ from Lemma~\ref{lem:expt}.
Let $x\in V$ be a given vertex. 
Let $E^x$ denote the expectation $E$ conditioned on the initial particle configuration having one active particle at $x$, and 
$E_x$ denote the expectation conditioned on the initial particle configuration having no particle at $x$. 
\begin{lem}\label{lem:m}
   For any finite subset $K\subset V$ and vertex $x\in K$ we have
	$$
	    E_x[m_{K}(x)] 
		\leq  E^x[m_{(x,K)}^1(x)].
	$$
\end{lem}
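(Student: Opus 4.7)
The plan is to prove the inequality deterministically on each realization of $(\eta,\tau)$ and then average. The core obstacle is an asymmetry in how sleep instructions at $x$ behave: in a full stabilization of $K$ a sleep at $x$ can succeed and freeze $x$ into state $\rho$, whereas in the first weak stabilization of $(x,K)$ started with an active particle at $x$, every topple at $x$ happens with $\ge 2$ particles and so every sleep instruction at $x$ fails. Lemma~\ref{prop:lemma5} is precisely what I would use to remove this obstacle: define $\tilde\tau$ to be the array obtained from $\tau$ by replacing every $\tau^{x,j}=\tau_{x\rho}$ with the idle instruction $\iota$, and leaving all other entries unchanged. Then $\tau\le\tilde\tau$ in the sense of Lemma~\ref{prop:lemma5}, which gives $m_{K,\eta,\tau}(x)\le m_{K,\eta,\tilde\tau}(x)$.

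The heart of the argument is a shift-by-one coupling, valid under $\tilde\tau$, between (a) the full stabilization of $K$ started from $\eta$ with $\eta(x)=0$ and (b) the first weak stabilization of $(x,K)$ started from $\eta+\delta_x$. Let $\zeta^{(a)}$ and $\zeta^{(b)}$ denote the configurations evolving in the two procedures and set $\tilde\zeta^{(b)}:=\zeta^{(b)}-\delta_x$. Because $\eta(x)=0$ we have $\tilde\zeta^{(b)}_0=\zeta^{(a)}_0$; because $x$ never becomes $\rho$ under $\tilde\tau$, throughout the weak stabilization one has $\zeta^{(b)}(x)\ge 1$, so $\tilde\zeta^{(b)}$ is a bona fide nonnegative configuration. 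I would then verify, one case at a time, that the topple-legality conditions, the per-instruction effects and the termination conditions all agree under the correspondence $\tilde\zeta^{(b)}\leftrightarrow\zeta^{(a)}$. Toppling at $x$ in (b) requires $\zeta^{(b)}(x)\ge 2$, i.e.\ $\tilde\zeta^{(b)}(x)\ge 1$, which is exactly (a)'s condition; at $y\ne x$ the conditions coincide because $\tilde\zeta^{(b)}(y)=\zeta^{(b)}(y)$. Jump instructions act identically, $\iota$-instructions at $x$ do nothing on either side, and sleep at $y\ne x$ behaves the same because $\zeta^{(a)}$ and $\tilde\zeta^{(b)}$ agree off $x$. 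Finally, (a) terminates when $\zeta^{(a)}(x)=0$ (since $x$ cannot become $\rho$ under $\tilde\tau$) and (b) terminates when $\zeta^{(b)}(x)\le 1$, i.e.\ $\tilde\zeta^{(b)}(x)=0$. Running topples in a common order thus stabilizes both procedures simultaneously, and the abelian property (Lemma~\ref{prop:lemma2}, extended to weak stabilization by Remark~\ref{remark:weakstab}) yields $m_{K,\eta,\tilde\tau}(x)=m^1_{(x,K),\eta+\delta_x,\tilde\tau}(x)$.

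To close, I would note that during the first weak stabilization of $(x,K)$ every sleep instruction at $x$ is executed with $\ge 2$ particles and therefore has the same null effect as $\iota$; hence $m^1_{(x,K),\eta+\delta_x,\tilde\tau}(x)=m^1_{(x,K),\eta+\delta_x,\tau}(x)$. Chaining the three steps yields the deterministic inequality $m_{K,\eta,\tau}(x)\le m^1_{(x,K),\eta+\delta_x,\tau}(x)$ (both sides are almost surely finite because jump instructions occur with positive probability, so $x$ clears in finite time). Taking expectations while coupling the two conditional measures by drawing a common Bernoulli profile on $K\setminus\{x\}$ and placing $0$ (respectively $1$) active particle at $x$ then produces $E_x[m_K(x)]\le E^x[m^1_{(x,K)}(x)]$. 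The hard part will be the case-checking in the shift coupling, particularly the termination-condition match and the treatment of the sleep-at-$y$ instructions at neighbours of $x$; but once Lemma~\ref{prop:lemma5} has neutralized the sleep-at-$x$ asymmetry, the verification is essentially mechanical.
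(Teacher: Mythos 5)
Your proposal is correct, and the underlying idea — a one-particle shift coupling between the full stabilization of $K$ from $\eta$ with $\eta(x)=0$ and the first weak stabilization of $(x,K)$ from $\eta+\delta_x$, run with the same instruction array — is exactly the paper's. What differs is the packaging. The paper runs the two procedures directly under the original array $\tau$ and observes that a successful sleep at $x$ ends $\eta$'s stabilization while $\eta^x$'s weak stabilization (where the same sleep is ineffective because $x$ carries $\ge 2$ particles) continues, yielding the inequality by a short case analysis. You instead invoke Lemma~\ref{prop:lemma5} up front to pass from $\tau$ to $\tilde\tau$ (sleeps at $x$ replaced by $\iota$), then show an \emph{exact equality}
$m_{K,\eta,\tilde\tau}(x)=m^1_{(x,K),\eta+\delta_x,\tilde\tau}(x)$ via the shift coupling, then observe that $\tilde\tau$ and $\tau$ produce the same weak stabilization of $(x,K)$ from $\eta+\delta_x$ because every sleep-at-$x$ seen there is already null. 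The three-step chain $m_{K,\eta,\tau}(x)\le m_{K,\eta,\tilde\tau}(x)=m^1_{(x,K),\eta+\delta_x,\tilde\tau}(x)=m^1_{(x,K),\eta+\delta_x,\tau}(x)$ has the advantage of isolating where the inequality actually enters (only in the first step, via Lemma~\ref{prop:lemma5}) and of replacing the paper's informal ``do the same topplings'' coupling by a crisp identity between processes that, under $\tilde\tau$, have literally the same legality conditions, per-instruction effects, and termination condition after subtracting $\delta_x$. Both arguments are sound; yours is arguably more transparent at the cost of a longer chain.
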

\begin{proof}
   Consider an initial particle configuration $\eta$ having no particle at $x$, and the particle configuration $\eta^x$ obtained from $\eta$ by adding 
   an active particle at $x$. We will show a stronger result saying that, by using the same instruction array for both $\eta$ and $\eta^x$, $m_K(x)$ starting 
   from $\eta$ is at most $m^1_{(x,K)}(x)$ starting from $\eta^x$.
   We stabilize $K$ starting from $\eta$ via weak stabilization of $(x,K)$, and do the same topplings for $\eta^x$. 
   Since $\eta$ and $\eta^x$ differ only at $x$, until the first weak stabilization of 
   $\eta$ is concluded, the same topplings can be carried out in $\eta^x$ as well. 
   At this point, 
   if there is a particle at $x$ in $\eta$, there are two particles at $x$ in $\eta^x$. 
   Then if the next instruction at $x$ is a jump instruction, we can perform the same toppling in $\eta$ and $\eta^x$, and we repeat this procedure until
   another weakly stable configuration is obtained in $\eta$.
   On the other hand, if the next instruction at $x$ is a sleep instruction, then the stabilization of $\eta$ is concluded, but the 
   weak stabilization of $\eta^x$ continues. 
   Finally, if there is no particle at $x$ at the end of a weak stabilization of $\eta$, then the stabilization of $\eta$ and the weak stabilization of 
   $\eta^x$ are concluded.
   Therefore, under this coupling, the weak stabilization of $\eta$ concludes no later than that of $\eta^x$, concluding the proof.
\end{proof}

\begin{proof}[Proof of Lemma~\ref{lem:expt}]
	The crucial observation is the following. 
	Assume that $T_{(x,K)}\geq 2$. After each of the first $T_{(x,K)}-1$ weak stabilizations of $(x,K)$, we must perform at least one toppling at $x$, 	and this toppling happens after the first weak stabilization of $(x,K)$, so it is not counted in $m^{1}_{(x,K)}(x)$. This gives that 
	\begin{equation}
		\label{eq:bound1}
		T_{(x,K)} -1 \leq m_{K}(x) - m^{1}_{(x,K)}(x).
	\end{equation}
	The above bound also holds when $T_{(x,K)}=1$ since $m_{K}(x)\geq m^{1}_{(x,K)}(x)$.
	Then the lemma follows by claiming that 
	\begin{align}
		\label{eq:m}
        E[m_{K}(x)] 
		\leq  E[m_{(x,K)}^1(x)] + C_G ( 1 + \lambda).
	\end{align}
	First we prove~\eqref{eq:m} with $E$ replaced with $E^x$. 
	Denote the particle that starts at $x$ by $z$. 
    From Lemma~\ref{prop:lemma5}, we have that if we ignore some sleep instructions during the stabilization of $K$ (i.e., we replace some sleep instructions in the    instruction array $\tau$ with neutral instructions $\iota$), 
    the value of $m_K(x)$ can only increase.
    Therefore, we can bound $m_K(x)$ from above by carrying out a two-step stabilization procedure. 
    In the first step, we move $z$ ignoring any sleep instruction seen until $z$ exits $K$. 
    We call $\mathcal{V}$ the expected number of topplings at $x$ up to this point. 
    Then, in the second step, we stabilize $K$ in an arbitrary manner.
    Using Lemma~\ref{prop:lemma5} as mentioned above, we conclude that 
    $$
      E^x[m_{K}(x)] 
		\leq \mathcal{V} + E_x[m_{K}(x)].
    $$
    Note that $\mathcal{V} = C_G ( 1 + \lambda)$, as
    every time the particle visits $x$, we find a geometrically
    distributed number of sleep instructions (which are replaced by instructions $\iota$) before the particle jumps out of $x$.
    The expected number of sleep instructions found at $x$ after every visit
    is $1 + \lambda$.
    With this we obtain
    $$
        E^x[m_{K}(x)] 
		\leq  C_G ( 1 + \lambda) + E_x[m_{K}(x)]
		\leq  C_G ( 1 + \lambda) + E^x[m_{(x,K)}^1(x)],
    $$
    where the last step follows from Lemma~\ref{lem:m}.
    
    Now we establish~\eqref{eq:m} with $E$ replaced with $E_x$. 
    Using Lemma~\ref{lem:m}, we have
    $$
        E_x[m_{K}(x)] 
		\leq  E^x[m^1_{(x,K)}(x)].
    $$
    Now for the term $E^x[m^1_{(x,K)}(x)]$, let $\eta$ be an initial particle configuration having an active particle at $x$, and call $z$ the particle that starts at $x$. 
    Let $\eta_x$ be the particle configuration obtained from $\eta$ by removing $z$.
    We carry out a two-step stabilization procedure, as in the previous case. 
    In the first step, we move $z$ ignoring any sleep instruction seen until $z$ exits $K$. 
    We call $\mathcal{V}$ the expected number of topplings at $x$ up to this point. 
    In the second step, we perform a weak stabilization of $(x, K)$ starting from the particle configuration $\eta_x$.
   Note that by Lemma~\ref{prop:lemma1},  Lemma~\ref{prop:lemma5},  and Remark~\ref{remark:weakstab}, we obtain that after performing some      
   illegal topplings and ignoring some sleep instructions, 
   the value of $m^1_{(x,K)}(x)$ can only increase. Thus, we conclude that
   $
      E^x[m^1_{(x,K)}(x)] \leq \mathcal{V} + E_x[m^1_{(x,K)}(x)].
   $
   As in the previous case, we have that $\mathcal{V} = C_G ( 1 + \lambda)$.
   Putting everything together, we have 
   $$
      E_x[m_{K}(x)] 
      \leq E^x[m^1_{(x,K)}(x)]
      \leq C_G (1 + \lambda) + E_x[m^1_{(x,K)}(x)], 
   $$ 
   which concludes the proof.
\end{proof}

\section{Proof of Theorem~\ref{theo:transient_graphs}}\label{sec:proof_of_theorem_transient_graphs}
Let $L$ be a positive integer, and let $x\in V$ be a fixed vertex.
Let $B_L$ be the ball of radius $L$ centered at $x$. 
For any $y\in B_L$, let $p_y$ be the probability that a random walk starting from $y$ visits $x$ before exiting $B_L$. 
\begin{lem}\label{lem:psuminfty}
   For any vertex-transitive, transient graph, we have 
   $
      \sum_{y\in B_L}p_y \to \infty \text{ as $L\to\infty$}.
   $
\end{lem}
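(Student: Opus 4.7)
The plan is to recast $\sum_{y \in B_L} p_y$ as an expected exit time of simple random walk from $B_L$ divided by the expected number of visits to $x$ before exiting $B_L$, and then use transience to bound the latter while the former diverges.

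For $a,b \in B_L$, let $g_L(a,b)$ denote the expected number of times a simple random walk started at $a$ visits $b$ before exiting $B_L$, and let $T_L$ be the expected value of the first exit time $\tau_L$ from $B_L$ for a walk started at $x$. Applying the strong Markov property at the first hitting time of $x$ by a walk started at $y \in B_L$ gives $g_L(y,x) = p_y \, g_L(x,x)$, so
$$
   p_y = \frac{g_L(y,x)}{g_L(x,x)}.
$$
Since $G$ is vertex-transitive and hence regular, the transition kernel of the walk killed on exiting $B_L$ is symmetric, which yields the standard reversibility identity $g_L(y,x) = g_L(x,y)$. Summing over $y \in B_L$ and using that the walk lies in $B_L$ throughout $[0,\tau_L)$ gives
$$
   \sum_{y \in B_L} p_y = \frac{1}{g_L(x,x)} \sum_{y \in B_L} g_L(x,y) = \frac{T_L}{g_L(x,x)}.
$$

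To conclude, I would bound the two factors separately. Transience of $G$ gives $g_L(x,x) \leq C_G < \infty$, with $C_G$ as defined in Theorem~\ref{theo:boundsQ}. On the other hand, $\tau_L$ is nondecreasing in $L$ and $\tau_L \to \infty$ almost surely, since any finite-length trajectory of the walk is contained in some ball of finite radius around $x$; monotone convergence then yields $T_L \to \infty$. Combining these two bounds gives $\sum_{y \in B_L} p_y \to \infty$ as $L \to \infty$. I do not anticipate a real obstacle here: the only substantive input is the killed-walk reversibility identity, which is entirely standard for simple random walk on a regular graph.
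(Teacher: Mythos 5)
Your proof is correct, and it takes a genuinely different route from the paper's. You pass through the killed Green's function: the key steps are the strong-Markov identity $g_L(y,x) = p_y\, g_L(x,x)$, the reversibility identity $g_L(y,x) = g_L(x,y)$ (valid since the graph is regular and the killed transition kernel is therefore symmetric), and the summation $\sum_{y\in B_L} g_L(x,y) = E_x[\tau_L]$; transience bounds $g_L(x,x)\le C_G$ and $E_x[\tau_L]\ge L\to\infty$ finishes it. The paper instead lower-bounds $p_y$ by the excursion probability $\tilde p_y = P_y(\tau_x < \tau_y^+\wedge\tau_{\mathrm{exit}})$, uses path reversal to rewrite $\tilde p_y = P_x(\tau_y < \tau_x^+\wedge\tau_{\mathrm{exit}})$, and observes that $\sum_y \tilde p_y$ is the expected number of distinct vertices visited before returning to $x$ or exiting $B_L$, which is at least $L$ on the no-return event. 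Both use reversibility of the walk; your version yields the cleaner exact identity $\sum_y p_y = E_x[\tau_L]/g_L(x,x)$ and hence a quantitatively sharper lower bound in graphs where the exit time grows superlinearly, while the paper's version is slightly more elementary in that it avoids Green's functions entirely and works directly with hitting probabilities. Either argument is perfectly adequate for the qualitative statement of the lemma.
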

\begin{proof}
We can lower bound $p_y$ by $\tilde p_y$, the probability that a random walk starting from $y$ visits $x$ before exiting $B_L$ or returning to $y$. By symmetry, $\tilde p_y$ is equal to the probability that a random walk starting from $x$ visits $y$ before returning to $x$ and before exiting $B_L$.
	Therefore, $\sum_{y\in B_L} \tilde p_y$ is the expected number of vertices visited by a random walk starting from $x$ before returning to $x$ and before exiting $B_L$.
	In a transient graph, this random walk has a positive probability of never returning to $x$, in which case it visits at least $L$ vertices. This 
	establishes the lemma.
\end{proof}

\begin{proof}[Proof of Theorem~\ref{theo:transient_graphs}]
    We will stabilize $B_L$ and show that, for any fixed $\mu>0$ there exists a fixed $\lambda>0$ small enough such that
	the number of topplings at $x$ goes to infinity with $L$. This implies that $\mu_c\to 0$ as $\lambda\to 0$.
	
	Let $\eta$ be the initial particle configuration inside $B_L$ and let $\eta_s$ be the particle configuration inside $B_L$ obtained after stabilization of $B_L$.
	Then $\eta_s$ only contains sleeping particles.
	For each particle of $\eta_s$, we start a so-called \emph{ghost particle} which performs independent simple random walk steps until exiting $B_L$.
	Let $W_L$ be the number of visits to $x$ by particles or ghosts, and let $R_L$ be the number of times that $x$ was visited by ghosts. 
	So $W_L-R_L$ is the number of topplings at $x$ during the stabilization of $B_L$.
	Let 
	$N_0$ be the number of visits to $x$ of a random walk that starts from $x$ and is killed upon exiting $B_L$. 
   For simplicity, let $q=q(\lambda)=3\sqrt{\lambda\lr{C_G(1+\lambda)+1}}$, the upper bound in the second part of Theorem~\ref{theo:boundsQ}. 
   Hence,
	\begin{equation}
	   \E[W_L-R_L]
	   =\sum\nolimits_{y\in B_L}\lr{\mu-Q(y,B_L)}p_y\E[N_0]
	   \geq \lr{\mu-q}\E[N_0]\sum\nolimits_{y\in B_L}p_y.
	   \label{eq:wr2nd}
	\end{equation}
	Note that $N_0$ is a geometric random variable and, for any transient graph, it holds that $\E[N_0] < \infty$ as $L\to\infty$.
	Also, Lemma~\ref{lem:psuminfty}
	gives that for any $\mu>q$, $\E[W_L-R_L]\to\infty$ as $L\to\infty$.
	We want to show that 
	\begin{equation}
	   \PR\lr{W_L -R_L \leq \frac{\E[W_L-R_L]}{3}} \leq c<1,
	   \label{eq:goal}
	\end{equation}
	for some constant $c$ independent of $L$.
	This implies that $\liminf_{L\to\infty}\PR\lr{W_L -R_L > \frac{\E(W_L-R_L)}{3}}>0$.
	By the 0-1 law, we then obtain that $W_L -R_L$ goes to infinity almost surely, concluding the proof. 
	
	In order to establish~\eqref{eq:goal}, note that 
   \begin{align}
      \label{eq:unionbound}
      &{P}\lr{{W}_L - {R}_L \leq \frac{E [{W}_L - {R}_L]}{3}}\nonumber\\
      &= {P}\lr{ W_L  - E [{W}_L]+\frac{E [{W}_L - {R}_L]}{3}\leq {R}_L - E [{R}_L] -\frac{E [{W}_L - {R}_L]}{3}} \nonumber\\
      &\leq {P}\lr{\left| W_L  - E [{W}_L]\right|\geq \frac{E [{W}_L - {R}_L]}{3}}+ {P}\lr{\left|{R}_L - E [{R}_L]\right| \geq \frac{E [{W}_L - {R}_L]}{3}}. 
   \end{align}
   We now use Chebyshev's inequality, which gives
   \begin{equation}
      \label{eq:boundcheb}
      \begin{split}
       {P}
       \lr{{W}_L - {R}_L \leq  \frac{E [{W}_L - {R}_L]}{3}} \, & \leq  
       9 \frac{\mathrm{Var}({W}_L)}{E^2[{W}_L-{R}_L]} + 9  \frac{\mathrm{Var}(R_L)}{E^2[{W}_L-{R}_L]}.
       \end{split}
   \end{equation}
   We claim that 
   \begin{equation}
      \lim_{L\to \infty} \frac{\Var(W_L)}{\E^2[W_L-R_L]}=0,
      \label{eq:wlim}		  
   \end{equation}
   and that for any $\mu>0$ and for any small enough $\lambda$,
   \begin{equation}
	   \limsup_{L\to \infty} \frac{\Var(R_L)}{\E^2[W_L-R_L]}\leq\frac{q}{(\mu-q)^2}.
      \label{eq:rlim}		  
   \end{equation}
   Note that the above bound goes to $0$ as $\lambda\to0$. 
   Putting~\eqref{eq:wlim} and~\eqref{eq:rlim} into \eqref{eq:boundcheb} establishes \eqref{eq:goal}, which concludes the proof of the theorem.
   
   It remains to establish~\eqref{eq:wlim} and~\eqref{eq:rlim}.
	For any $3$ independent random variables $A,B,C$ note that 
	\begin{equation}
	   \Var(ABC) = \E[A^2]\E[B^2]\E[C^2] - \E^2[ A]\E^2[B]\E^2[C]. 
	   \label{eq:var}
	\end{equation}
    Then using independence we can write $\Var(W_L)=\sum_{y\in B_L}\Var(\ind{\eta(y)=1}I_yN_0)$, where $I_y$ is the indicator that a random walk 
    starting from $y$ visits $x$ before exiting $B_L$; hence, $p_y=\E[I_y]$.
   Now applying~\eqref{eq:var}, we obtain
   \begin{align*}
      \Var(W_L) 
      &= \sum_{y\in B_L}\lr{\mu p_y \E[N_0^2] - \mu^2 p_y^2 \E^2[N_0]}\\
      &= \mu \E[N_0^2]\sum_{y\in B_L}p_y\lr{1 - \mu p_y\frac{E^2[N_0]}{E[N_0^2]}}
      \leq \mu\E[N_0^2]\sum_{y\in B_L}p_y.
   \end{align*}
   Therefore, using~\eqref{eq:wr2nd},
   $$
      \frac{\Var(W_L)}{\E^2(W_L-R_L)} 
      \leq \frac{\mu \E[N_0^2]}{(\mu-q)^2\E^2[N_0]\sum_{y\in B_L}p_y}
      \to 0,
   $$
   since $\sum_{y\in B_L}p_y\to \infty$ by Lemma~\ref{lem:psuminfty}, while all the other terms are bounded away from both infinity and zero.
	
   Now we turn to~\eqref{eq:rlim}.
   For $y\in B_L$, write 
   $$
      S_y=\ind{\eta_s(y)=1}, 
      \quad
      s_y = \E[S_y]=Q(y,B_L),
      \quad\text{and}\quad
      s_{x,y} = \E[S_xS_y].
   $$
   Using this notation, we have 
   $R_L = \sum_{y\in B_L} S_y I_y N_0$.
   Since
   $\Var(R_L) = \E[R_L^2] - \E^2[R_L]$,
   we write 
   $$
      \E R_L^2 = \sum_{y\in B_L} \E\lrb{S_y I_y N_0^2} + \sum_{y, z \in B_L, y\neq z} \E\lr{S_yS_zI_yI_z N_0 N_0'},
   $$
   where $N_0,N_0'$ are independent and identically distributed. Using independence, we have
   $$
      \E R_L^2 = \sum_{y\in B_L} s_y p_y \E[N_0^2] + \sum_{y, z \in B_L, y\neq z} s_{y,z} p_y p_z \E^2[N_0].
   $$
   Hence,
   \begin{align*}
      \Var(R_L) 
      &= \sum_{y\in B_L} s_y p_y \E[N_0^2] + \sum_{y, z \in B_L, y\neq z} s_{y,z} p_y p_z \E^2[N_0] - \lr{\sum_{y\in B_L}s_yp_y \E[N_0]}^2\\
      &= \sum_{y\in B_L} \lr{s_y p_y\E[N_0^2] -s_y^2p_y^2\E^2[N_0]}+ \sum_{y, z \in B_L, y\neq z} (s_{y,z}-s_ys_z) p_y p_z \E^2[N_0]\\
      &\leq \sum_{y\in B_L} s_y p_y\E[N_0^2]+ \sum_{y,z \in B_L, y\neq z} (s_{y}-s_ys_z) p_y p_z \E^2[N_0]\\
      &\leq q\E[N_0^2]\sum_{y\in B_L} p_y+ q \E^2[N_0]\sum_{y, z \in B_L, y\neq z}  p_y p_z.
   \end{align*}
   Finally, we obtain
   \begin{align*}
      \frac{\Var(R_L)}{\E^2(W_L-R_L)}
      &\leq \frac{q\E[N_0^2]\sum_{y\in B_L} p_y+ q\E^2[N_0]\sum_{y, z \in B_L, y\neq z}  p_y p_z}{(\mu-q)^2\E^2[N_0] \lr{\sum_{y\in B_L} p_y}^2}\\
      &\leq \frac{q\E[N_0^2]}{(\mu-q)^2\E^2[N_0] \sum_{y\in B_L} p_y}
         + \frac{q}{(\mu-q)^2}.
   \end{align*}
   Note that for any fixed $\lambda>0$ the first fraction goes to $0$ with $L$ since $\sum_x p_x\to\infty$ and all the other terms are bounded away from zero and infinity. 
   The second term can be made arbitrarily small since $q\to 0$ as $\lambda\to 0$. In particular, if $\mu> q+3\sqrt{q}$, the second term is smaller than $1$, so ARW is active almost surely.
   This establishes~\eqref{eq:rlim}.
\end{proof}

\section{Proof of Theorem \ref{theo:activity_non_amenable}}\label{sec:proof_of_theorem_activity_non_amenable}
We prove Theorem~\ref{theo:activity_non_amenable} by first establishing general sufficient conditions
that give $\mu_c<1$ (Theorem~\ref{theo:suffcondnonamenable} below), and then showing that graphs of positive speed for random walks satisfy those conditions.

Let $x\in V$ be a fixed vertex of $G$, which we refer to as the \emph{origin}. 
Let $\{ X(t)\}_{t \in \mathbb{N}}$ denote a simple random walk on $G$ 
starting from the origin, and let $\{Y(t)\}_{t\in\mathbb{N}}$ be independent
random variables such that, for any $t\in\mathbb{N}$, we have $Y(t)=0$ with probability $\frac{1}{1 + \lambda}$
and $Y(t)=1$ with probability $\frac{\lambda}{1 + \lambda}$. 
Let 
$B_L$ be the ball of radius $L$ centered at $x$, and let 
$A_L$ be the vertices at distance $L$ from $x$.
For any set $V'\subset V$, let 
$$
   \tau_{V'} := \min\{t\in\mathbb{N} \, \colon \, X(t)\in V'\}
$$ 
be the first hitting time of the random walk to $V'$ and 
$$
   \tau_{V'}^+ := \min\{t\geq 1 \, \colon \, X(t)\in V'\}
$$ 
be the first return time of the random walk to $V'$.
Finally, let 
$$
   \tau_{V'}^\mathrm{k} := \min\{t\geq 1 \, \colon \, X(t)\notin V' \text{ and } Y(t)=1\}.
$$ 
We can interpret the above quantity by considering that the random walk is ``killed'' outside $V'$ at times $t$ when $Y(t)=1$; using this, 
$\tau_{V'}^\mathrm{k}$ gives the time the random walk is killed.

Here we consider that the initial particle configuration, denoted by $\eta$, is given by any product of identical
measures on $\mathbb{N}^V$ with density $\E[\eta(x)]=\mu$. We assume that the graph $G$
is vertex-transitive. Note that the assumption of positive speed of the random walk on $G$ is not
required in the next theorem.
Let $\nu_0=\PR(\eta(x)=0)$ be the probability that a vertex is empty
at time $0$, which is the same for all vertices.
\begin{theorem}\label{theo:suffcondnonamenable}
   Given positive integers $n<L$, set $\Lambda=V \setminus B_{L}$ and let 
   $$
      N_n^L := |\{t\in \mathbb{N} \,\colon\, X(t) \in A_n \text{ and } t< \tau_{\Lambda} \land \tau^+_{x}\}|
   $$ 
   be the number of visits of $X(t)$ to $A_n$
   before $X(t)$ enters $\Lambda$ or returns to the origin. 
   Let 
   $$
      \tilde{N}_n^L := |\{t\in \mathbb{N} \,\colon\, X(t) \in A_n \text{ and } t< \tau_{\Lambda} \land \tau^+_{x} \land \tau^\mathrm{k}_{B_{n-1}}\}|
   $$
   be the number of visits of $X(t)$ to $A_n$ before $X(t)$ enters $\Lambda$, returns to the origin
   or is ``killed'' outside $B_{n-1}$.
   Let also $M_L := \sum_{n=0}^L{N_n^L}$ and 
   $\tilde{M}_L := \sum_{n=0}^L{\tilde{N}_n^L}$.
   If given $\mu$ and $\lambda$ we have
   \begin{equation}
      \liminf_{L \rightarrow \infty}  \frac{E [ \tilde{M}_L ]}{E[M_L]} > \frac{\nu_0}{\mu+\nu_0}.
      \label{eq:suffcond}
   \end{equation}
   then ARW is active almost surely.
\end{theorem}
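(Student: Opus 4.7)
I would follow the high-level paradigm of the proof of Theorem~\ref{theo:transient_graphs}: fix a ball $B_L$ centered at $x$, stabilize it, and show that under \eqref{eq:suffcond} the expected number of topplings $\E[m_{B_L}(x)]$ diverges with $L$. A second-moment/Chebyshev argument then gives $\liminf_{L\to\infty}\PR(m_{B_L}(x)>0)>0$, and the 0-1 law (Lemma~\ref{prop:lemma4}) promotes this to almost sure activity of ARW.

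The technical core will be a lower bound of the shape
$$
\E[m_{B_L}(x)] \;\geq\; (\mu+\nu_0)\,\E[\tilde M_L] \;-\; \nu_0\,\E[M_L],
$$
which diverges with $L$ precisely under \eqref{eq:suffcond}: cross-multiplying the hypothesis $\liminf \E[\tilde M_L]/\E[M_L] > \nu_0/(\mu+\nu_0)$ shows the right-hand side grows at least like a positive constant times $\E[\tilde M_L]$, which tends to $\infty$. To establish such an inequality I would carry out the stabilization of $B_L$ walker-by-walker, using the Abelian property (Lemma~\ref{prop:lemma2}) to reorder topplings so that each initial particle completes its trajectory before the next one is activated. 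By vertex-transitivity and reversibility, the law of the walk of a particle starting at $y\in A_n$ is, up to an automorphism of $G$, the same as that of $X(\cdot)$ started at $x$; hence the expected number of visits of this walker to $x$ is equal to the expected visits of $X(\cdot)$ to $A_n$ under the corresponding stopping rules, which is precisely what $N_n^L$ and $\tilde N_n^L$ measure.

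The coefficient $\mu+\nu_0$ versus $\nu_0$ reflects the interaction between sleep instructions and site occupancy. When the walker encounters a sleep instruction at a site that is still empty (probability $\nu_0$ at a fresh vertex under the product initial law), it genuinely falls asleep and its contribution to $m_{B_L}(x)$ is lost; this matches exactly the killing event $\{Y(t)=1\}$ outside $B_{n-1}$ in the definition of $\tilde N_n^L$. When the site is already occupied (probability $\mu$), the two particles coalesce and the walker effectively survives past the sleep instruction, so occupied encounters also contribute to $\E[m_{B_L}(x)]$, eventually routed back to $x$. This is what explains $\tilde M_L$ appearing with coefficient $\mu+\nu_0$ rather than $\nu_0$; the correction $-\nu_0\,\E[M_L]$ absorbs the worst-case loss from sleep instructions encountered outside the current annulus, controlled via monotonicity with enforced activation (Lemma~\ref{prop:lemma5}), which allows us to ignore some sleep instructions at the cost of only increasing the odometer at $x$.

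Once $\E[m_{B_L}(x)]\to\infty$ is established, I would bound $\mathrm{Var}(m_{B_L}(x))$ by a constant times $\E[m_{B_L}(x)]$ using the approximate independence of contributions from distinct starting vertices, as in the variance computation behind \eqref{eq:wlim} in the proof of Theorem~\ref{theo:transient_graphs}. The main obstacle is making the coupling between the ARW stabilization and the killed random walks $(X,Y)$ sharp enough to extract the clean coefficients $\mu+\nu_0$ and $\nu_0$: this will require a careful walker-by-walker ordering of the topplings and a simultaneous use of Lemmas~\ref{prop:lemma2} and~\ref{prop:lemma5} to justify reorderings and the suppression of extraneous sleep instructions without decreasing the odometer at $x$.
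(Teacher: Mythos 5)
You correctly identify the target inequality $\E[m_{B_L}(x)]\gtrsim(\mu+\nu_0)\E[\tilde M_L]-\nu_0\E[M_L]$ and the endgame (Chebyshev plus the 0--1 law), and you correctly invoke reversibility to convert visits from $y\in A_n$ to $x$ into visits from $x$ to $A_n$. But the core device that makes all of this work is missing: the \emph{ghost particle} bookkeeping. In the paper, the stabilization is done level by level (annulus $A_L$, then $A_{L-1}$, \dots), each particle being moved until it reaches the origin, reaches an empty site one level in, sleeps outside that inner ball, or exits $B_L$. Whenever a particle sleeps (condition 3) and the annulus-site $z$ it was launched from this step satisfies $\eta(z)=0$, a ghost is born at the sleeping site and performs an independent walk. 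The point of the ghosts is that $W_L$ (real plus ghost visits to $x$) is cleanly dominated from below by a sum of independent killed-random-walk indicators $\tilde W_L$ indexed by occupied sites, while $R_L$ (ghost visits) is dominated from above by an analogous sum $\tilde R_L$ indexed by empty sites, and $m_{B_L}(x)\geq G_L=W_L-R_L$. Without the ghost construction there is no tractable lower bound on the odometer at $x$, because a sleeping particle may later be woken, so its ``lost'' contribution is not actually lost; the ghost exactly compensates for the uncertain future of a sleeping particle in a way that is measurable and subtractable. Your sketch (``each initial particle completes its trajectory before the next one is activated'') does not explain how reactivations are accounted for.

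Two more concrete problems. First, your explanation of the coefficients $\mu$ and $\nu_0$ is misattributed: in the paper they are the probabilities that the \emph{launch site} $z\in A_n$ is occupied ($E[\eta(z)]=\mu$, for real-walker contributions to $\tilde W_L$) or empty ($\nu_0$, for ghost contributions to $\tilde R_L$). They are not the probabilities that the site where the sleep instruction is found is occupied or empty, which is what you describe; that interpretation does not lead to the identity $E[\tilde W_L]-E[\tilde R_L]=(\mu+\nu_0)E[\tilde M_L]-\nu_0 E[M_L]$. The occupancy of the sleeping site is irrelevant to whether a ghost is created; what matters is that at most one ghost can ever be associated to each initially empty site, which is what gives the clean factor $\nu_0$. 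Second, your plan to apply Chebyshev directly to $m_{B_L}(x)$ is not viable: that random variable does not decompose into independent contributions and there is no obvious variance bound for it. The paper instead applies Chebyshev to $\tilde W_L-\tilde R_L$, which is by construction a signed sum of independent Bernoulli variables (so variance is at most mean), and then transfers the conclusion to $G_L$ and hence to $m_{B_L}(x)$ via the pointwise coupling $W_L\geq\tilde W_L$, $R_L\leq\tilde R_L$. Without replacing $m_{B_L}(x)$ by a dominated quantity with independent structure, the second-moment step does not go through.
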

\begin{proof}
   We will define a stabilization procedure for $B_{L}$ and show that the number of topplings at the origin goes to infinity with $L$.
   We will do the stabilization by moving particles located at different levels step by step. At the first step
   we move all particles which are located in $A_L$, at the next step we move all particles which are located in $A_{L-1}$, and so on. 
   The same particle might be moved several times in the course of the whole procedure. We now define such steps.
   
   \vspace{-0.5cm}
  \paragraph{First step.} Let $\eta$ be the initial particle configuration, and let $Z_L$ be the particles of $\eta$ which are in $A_L$. Order the particles in $Z_L$ in some arbitrary manner.
   Consider the first particle in the order and move that particle until one of the following events occur:
   \begin{enumerate}
   \itemsep0.5em 
   \item the particle reaches the origin,
   \item\label{it:empty} the particle reaches an empty site in $B_{L-1}$,
   \item\label{it:sleep} the particle ``uses'' a sleep instruction in $V \setminus B_{L-1}$,
   \item the particle reaches $\Lambda$.
   \end{enumerate}
   Then, take the second particle in the order and move it several times until one of the four events above occurs.
   After that, take the third particle in the order and do the same. 
   Repeat this procedure until all particles of $Z_L$ have been moved. We obtain a new particle configuration that we denote by $\eta^1$.
   
      \vspace{-0.5cm}
   \paragraph{Second step.} Let $Z_{L-1}$ be the particles of $\eta^1$ which are in $A_{L-1}$. 
   Note that $Z_L$ and $Z_{L-1}$ are not necessarily disjoint, since particles in $Z_L$ could have ended in $A_{L-1}$ after they were moved in the first step. 
   Order the particles of $Z_{L-1}$ in some arbitrary order.
   Now move the first particle in the order of $Z_{L-1}$ until one of the following events occur:
    \begin{enumerate}
   \itemsep0.5em 
   \item the particle reaches the origin,
   \item the particle reaches an empty site in $B_{L-2}$,
   \item the particle ``uses'' a sleep instruction in $V \setminus B_{L-2}$,
   \item the particle reaches $\Lambda$.
   \end{enumerate}
   Then, take the second particle in the order and move it several times until one of the four events above occurs. 
   After that, take the third particle in the order and do the same. 
   Repeat the same procedure until all particles of $Z_{L-1}$ have been moved. 
   We obtain a new particle configuration that we denote by $\eta^2$.
   
   \vspace{-0.5cm}
   \paragraph{Next steps.} We repeat the procedure above, analogously defining the set of particles $Z_{L-i}$, $i\in\{2,3,4,...,L-1\}$, and obtaining the particle configuration $\eta^{i+1}$.
   
   Note that $\eta^L$ may not be a stable configuration. However, letting $G_L$ be the total number of particles that stop at the origin during the procedure described above, 
   the Abelian property (Lemma~\ref{prop:lemma2}) implies that $m_{B_L}(x) \geq G_L$.
   Our goal is to prove that there exists a constant $c>0$ independent of $L$ such that the probability that $G_L > c L$ is bounded away from $0$, which implies that ARW is active by the 0-1 law.
   In order to estimate $G_L$, we introduce \textit{ghost particles} as in Section~\ref{sec:proof_of_theorem_transient_graphs}.
   Ghost particles can be created at any step of our procedure. 
   Consider the $(L-n+1)$th step, where we move particles from the set $Z_n$, the set of particles of $\eta^{L-n}$ that are located in $A_n$. 
   Let $w$ be one of the particles that is moved at this step. 
   Let $z \in A_n$ be its starting vertex. We create a ghost particle if the two next conditions hold:
   \begin{enumerate}[(i)]
   \itemsep0.5em 
     \item  $\eta(z)=0$ (i.e., $z$ is empty for the initial particle configuration),
      \item the motion of $w$ stops because it ``uses'' a sleep instruction at some site $y \in V \setminus B_{n-1}$ (i.e., the motion of $w$ stops due to condition~\ref{it:sleep} in the procedure above).
   \end{enumerate}
   The ghost particle is then created at  $y \in V \setminus B_{n-1}$, the site where the particle $w$ uses the sleep instruction.
   We call the site $z \in A_n$ above \textit{the site that is associated to the ghost}.  
   A crucial point to observe is that, in order for $w$ to create a ghost during step $L-n+1$, 
   it is necessary that $w$ is in $V \setminus B_n$ for the initial particle configuration $\eta$, 
   and that at some previous step $w$ is moved until reaching the site $z$, which was empty at that time (so $w$ is stopped according to condition~\ref{it:empty} in the procedure above). 
   Note that every particle creates at most one ghost in the course of the whole procedure. 
   Indeed, when this happens, the particle that is responsible for the generation of the ghost is not moved any more at any subsequent step. 
   After being created, each ghost particle performs independent simple random walk steps until reaching $\Lambda \cup\{x\}$, when it then stops.

   Let $W_L$ be the number of particles and ghosts visiting the origin, and let 
   $R_L$ be the number of ghosts visiting the origin.
   Then,
   $$
   G_L \, = \,  W_L - R_L.
   $$
   We now estimate the terms $W_L$ and $R_L$ separately.
   For any $j \in \mathbb{N}$ and $z \in V$, let
   $\left( X^{(z,j)}(t) \right)_{t \in \mathbb{N}}$ be an independent random walk
   on $V$ starting from $z$ and  
   $\left( Y^{(z,j)}(t) \right)_{t \in \mathbb{N}}$ be an infinite sequence of i.i.d. random variables
   such that $Y^{(0,0)}(0) = 1$ with probability $\frac{\lambda}{1+\lambda}$
   and $Y^{(0,0)}(0) = 0$ with probability $\frac{1}{1+\lambda}$.
   Let $\tau_S^{(z,j)}$  be the first time the random walk $\left( X^{(z,j)}(t) \right )_{t \in \mathbb{N}}$ visits
   the set $S \subset V$ and let us write simply $\tau_y^{(z,j)}$ if $S = \{y\}$.
   Then, 
   \begin{equation}
   \label{eq:WL}
      W_L \text{ stochastically dominates }
      \tilde{W}_L  := \sum\limits_{{n = 1} }^L   \sum\limits_{\, z \in A_n \,  }   \sum\limits_{ j = 1   }^{\eta(z) }   
   \ind{\mathcal{A}^{(z,j)} \cap \mathcal{B}^{(z,j)}},
   \end{equation}
   where $\mathcal{A}^{(z,j)}  := \{ \tau_{x}^{(z,j)} < \tau^{(z,j)}_{\Lambda} \}$,
   $\mathcal{B}^{(z,j)}  := \{ Y^{(z,j)}(t)=0 $ for any $t \leq \tau^{(z,j)}_{x}$ such that
   $X^{(z,j)}(t) \notin  B_{|z|-1} \}$,  
   and $|z|$ denotes the distance between $z$ and $x$. 
   
   Now we make a {crucial observation} for the estimation of $R_L$. Recall that every ghost can be associated to the site where the particle starts at the step it uses the sleep instruction and generates that ghost. From the definition of our procedure, it follows that for every site $z \in B_L$ such that  $\eta(z)=0$, \textit{there exists at most one ghost that can be associated to $z$}. It also follows that if $z \in B_L$ is such that $\eta(z)=1$, then no ghost can be associated to that site. Thus, if from every site $z\in B_L$ with $\eta(z)=0$ we start a \textit{sleeping random walk}
   $ \left( \,  X^{(z,0)}(t),\,  Y^{(z,0)}(t) \, \right)_{t \in \mathbb{N}}$
   and we count $\tilde{R}_{L}$, the number  
   of them which hit the origin before entering $\Lambda$ and such that $Y(t)=1$ 
   somewhere in $V \setminus B_{|z|-1}$,
   we conclude that
   \begin{equation}
   \tilde{R}_L \text{ stochastically dominates } R_L.
   \end{equation}
   Hence, we write,
   \begin{equation}
   \label{eq:RL}
    \tilde{R}_L  := \sum\limits_{{n = 1} }^L   \sum\limits_{\, z \in A_n \,  }   
    \ind{\eta(z)=0}
   \cdot \ind{\mathcal{A}^{(z,0)} \cap \overline{\mathcal{B}}^{(z,0)}} ,
   \end{equation}
   where for clarity we denote by
   $\overline{\mathcal{B}}^{(z,0)} := \lr{\mathcal{B}^{(z,0)}}^\compl = \{ Y^{(z,0)}(t)=1 $ for some $t \leq \tau^{(z,0)}_{x}$ such that
   $X^{(z,0)}(t) \notin B_{|z|-1} \}$ the complement
   of ${\mathcal{B}}^{(z,0)}$.
   As the initial particle configuration is distributed according to a product measure,
   from (\ref{eq:WL}) and (\ref{eq:RL}) it follows that,
   \begin{equation}
      \begin{split}
      \label{eq:expWLRL}
      E[\tilde{W}_L] - E[\tilde{R}_L]  & = 
      \sum\limits_{{n = 1} }^L   \sum\limits_{\, z \in A_n \,  }    \lrb{ \mu\, \cdot \,  P\left( \mathcal{A}^{(z,0)} \cap {\mathcal{B}}^{(z,0)}
      \right) \, - \,  \nu_0 \, \cdot \,  P \left( \mathcal{A}^{(z,0)} \cap \overline{\mathcal{B}}^{(z,0)} \right)} \\
      &= \sum\limits_{{n = 1} }^L   \sum\limits_{\, z \in A_n \,  }    \lrb{  (\mu+\nu_0)P\left( \mathcal{A}^{(z,0)} \cap {\mathcal{B}}^{(z,0)}
      \right) \, - \,  \nu_0 \, \cdot \,  P \left( \mathcal{A}^{(z,0)} \right)}.
      \end{split}
   \end{equation}
   To simplify the notation, we will henceforth drop the $0$'s from the superscript in the terms above.
   When analyzing the term $P(\mathcal{A}^z \cap \mathcal{B}^z)$, consider the last time $t$ that the random walk starting from $z\in A_n$ visits 
   $A_n$ before reaching the origin. We will denote by $y$ the vertex of $A_n$ where the random walk is in its last visit to $A_n$.
   Hence, decomposing in $y$ and $t$, we have
   \begin{equation}
      \label{eq:PABsplit}
      P(\mathcal{A}^z \cap \mathcal{B}^z) = \sum\limits_{y \in A_n} \sum\limits_{t = 1}^{\infty} P( \mathcal{C}^{z,y,t} \cap \mathcal{D}^{z,t} \cap \mathcal{E}^{z,t}) \cdot P(\tau^y_x < \tau^y_{A_n,+}),
   \end{equation}
   where $\mathcal{C}^{z,y,t} := \{ X^{z}(t)=y\}$,
   $\mathcal{D}^{z,t} := \{Y^z(t)=0 \, $ for any $i \leq t$ such that $X^z(i) \notin B_{|z|-1}\}$,
   $\mathcal{E}^{z,t}:= \{ \tau^z_{ \{x\} \cup \Lambda} > t \}$,
   $\tau^z_{S,+}$ is the first return time of the random walk starting from $z$ to the set $S \subset V$.
   Now since graph is transitive, any path of a random walk from a vertex $z_1$ to $z_2$ occurs with the same probability as the reversed path for
   a random walk going from $z_2$ to $z_1$.
   This gives that, for $y,z \in A_n$,
   \begin{equation}
   \label{symm1}
   P(\tau_x^y < \tau^y_{A_n,+}) = P ( \{ X^x(\tau_{A_n}^x) = y\} \cap \{\tau^x_{A_n} < \tau^x_{x,+}\});
   \end{equation}
   that is, the event $\tau_x^y < \tau^y_{A_n,+}$ is equivalent to the event that a random walk starting from $x$ visits 
   $A_n$ before returning to $x$, and visits $A_n$ for the first time at $y$.
   Also, for $y,z \in A_n$ and $t \in \mathbb{N}$,
   \begin{equation}
   \label{symm2}
   P( \mathcal{C}^{z,y,t} \cap \mathcal{D}^{z,t} \cap \mathcal{E}^{z,t}) = P( \mathcal{C}^{y,z,t} \cap \mathcal{D}^{y,t} \cap \mathcal{E}^{y,t}).
   \end{equation}
   Now plug (\ref{symm1}) and (\ref{symm2}) into (\ref{eq:PABsplit}).
   Summing over $z \in A_n$
   first and then over $y$ and $t$, and using the Markov property for the random walk,
   we conclude that
   \begin{multline}
   \label{eq:derivedAB}
   \sum\limits_{\, z \in A_n \,  }     \,   P\left( \mathcal{A}^{z} \cap {\mathcal{B}}^{z}  \right)= \\
     \sum\limits_{\, y \in A_n \,  } 
   \sum\limits_{t=0}^{\infty} 
   E \left[  \sum\limits_{z \in A_n}  \, \ind{\mathcal{C}^{y,z,t} \cap \mathcal{D}^{y,t} \cap \mathcal{E}^{y,t}} \right] \, \cdot \, P( \{ X^x( \tau_{A_n}^x ) = y \} \cap \{ \tau^x_{A_n} < \tau^x_{x,+} \} ) ]  =
   E [ \, \,  \tilde{N}_n^L \, \,   ].
   \end{multline}
   Similarly to (\ref{eq:derivedAB}), we obtain
   \begin{equation}
   \label{eq:derivedA}
   \sum\limits_{\, z \in A_n \,  }   \,   P\left( \mathcal{A}^{z}   \right)= 
   E [ \, \, {N}^L_n \, \, ].
   \end{equation}
   Hence, plugging (\ref{eq:derivedAB}) and (\ref{eq:derivedA}) into (\ref{eq:expWLRL}), we have
   \begin{align}
      \label{eq:expWLRLfinal1}
      E[\tilde{W}_L]-E[\tilde{R}_L] 
      &= \lr{\sum\limits_{n=0}^{L} (\mu+\nu_0)E[\tilde N_n^L]-\nu_0 E[N_n^L]} \nonumber\\
      &= (\mu+\nu_0)E[ \tilde M_L ] - \nu_0 E[ M_L ].
   \end{align}
   Note now that $E[M_L]$ goes with $L$ to the expectation of the return time of the random walk, which is infinite on any infinite,
   connected graph (see for example Theorem 1.1 in \cite{Gurevich}).
   Then, from our assumption in  (\ref{eq:suffcond}), it follows that  the lower bound above
   diverges with $L$.
   It remains to prove that
   this implies that  $G_L \rightarrow \infty$ with $L$ with positive probability,
   which in turn implies that ARW is active almost surely
   by the 0-1 law (Lemma~\ref{prop:lemma4}). 
   For this, we use the same derivation as in~\eqref{eq:unionbound} and~\eqref{eq:boundcheb}, which gives that
   \begin{align}
      {P}\lr{\tilde{W}_L - \tilde{R}_L < \frac{E [\tilde{W}_L - \tilde{R}_L]}{3}}
      &\leq 9 \frac{\Var(\tilde{W}_L)}{\E^2[\tilde{W}_L-\tilde{R}_L]} + 9  \frac{\Var(\tilde{R_L})}{\E^2[\tilde{W}_L-\tilde{R}_L]}\nonumber\\
      &\leq 9 \frac{\E[\tilde{W}_L]}{\E^2[\tilde{W}_L-\tilde{R}_L]} + 9  \frac{\E[\tilde{R_L}]}{\E^2[\tilde{W}_L-\tilde{R}_L]},
      \label{eq:boundvar}
   \end{align}
   where in the last step we use that $\Var(\tilde{W}_L)\leq \E[\tilde{W}_L]$ and $\Var(\tilde{R}_L)\leq \E[\tilde{R}_L]$ since 
   $\tilde W_L$ and $\tilde R_L$ are defined as a sum of independent Bernoulli random variables.
   Note that~\eqref{eq:expWLRLfinal1} and~\eqref{eq:suffcond} imply that 
   \begin{align}
      \E[\tilde W_L-\tilde R_L] 
      &> K \, \E[M_L] \quad\text{for some constant $K>0$ and all large enough $L$}.
      \label{eq:k}
   \end{align}
   Hence we obtain that $\E[\tilde W_L]\geq \E[\tilde R_L] $ for all large enough $L$.
   In addition, from the derivation of~\eqref{eq:expWLRL} and~\eqref{eq:expWLRLfinal1} we have
   $$
      \E[\tilde W_L]\leq (\mu+\nu_0) \E[\tilde M_L].
   $$
   Using these facts, we obtain
   \begin{align*}
      \E[\tilde W_L+\tilde R_L] 
      \leq 2\E[\tilde W_L] 
      \leq 2(\mu+\nu_0) \E[\tilde M_L].
   \end{align*}
   Plugging this into~\eqref{eq:boundvar}, and using~\eqref{eq:k}, we get
   \begin{align*}
       {P}
       \lr{\tilde{W}_L - \tilde{R}_L <  \frac{E [\tilde{W}_L - \tilde{R}_L]}{3}} 
        \leq \frac{18(\mu+\nu_0)\E[\tilde {M}_L] }{K^2 \E^2[M_L]}
        \leq \frac{18(\mu+\nu_0)}{K^2 \E[M_L]}.
   \end{align*}
   The last term converges to $0$ with $L$. 
   Hence, $W_L - R_L > \frac{E [W_L - R_L]}{3}$ is bounded away from $0$ and this concludes the proof.
\end{proof}

\begin{proof}[Proof of Theorem \ref{theo:activity_non_amenable}]
   We show that for any $\lambda>0$ 
   and $\mu > 1 - \frac{\alpha \delta}{1 + \lambda}$
   the condition in \eqref{eq:suffcond} is satisfied.  
   Observe that, conditioning on the non-return of the random walk to the origin,
   $\tilde{N}_n^L$ is stochastically larger than a random variable which takes 
   value $1$ with probability $\frac{1}{1+\lambda}$ and 
   $0$ with probability $\frac{\lambda}{1+\lambda}$,
   as the random walk hits $A_n$ at least one time.
   Hence,  
   \begin{equation}
   \label{eq:derivedAB2}
    \E[ \tilde{N}_n^L \, ]
   \geq \frac{\delta}{1 + \lambda}
   \quad\text{and, consequently,}\quad
    \E[ \tilde{M}_L \, ]
   \geq \frac{\delta}{1 + \lambda} L.
   \end{equation}
   Our goal is to 
   use that the random walk has a positive speed $\alpha$, which is to say that
   \begin{equation}
      \lim\limits_{t \rightarrow \infty} \frac{| X(t)| }{t} = \alpha \quad\text{almost surely},
      \label{eq:posspeed}
   \end{equation}
   to show that, for any $\epsilon>0$, there exists $L_0=L_0(\epsilon)$ large enough so that 
   \begin{equation}
      E[M_L] \leq \frac{L}{\alpha-\epsilon} \quad\text{for all $L\geq L_0$}.
      \label{eq:derivedA2}
   \end{equation}
   To do this, note that~\eqref{eq:posspeed} implies that, for any $\xi>0$, there exists $t_0$ large enough so that 
   $$
      P\left(X(t) > (1-\xi)\alpha t \quad \text{for all $t\geq t_0$}\right)>1-\xi.
   $$
   Now let $\Delta_L = \lceil\frac{L}{(1-\xi)\alpha}\rceil$, so for $L$ large enough we have that $X(\Delta_L)$ is outside $B_L$ with probability at least $1-\xi$. 
   If that does not happen, then $X(\Delta_L)$ is at some random vertex $y\in B_L$. Then after an additional time of $\Delta_{2L}$, with probability at least $1-\xi$, the walker 
   exits the ball of radius $2L$ centered at $y$; consequently, it also exits $B_L$. This gives that $P(X(\Delta_{3L}) \in B_L) \leq \xi^2$. Iterating this argument, we have 
   \begin{align*}
      E[M_L] 
      = \sum_{i\geq 1} P(M_L \geq i) 
      &\leq \Delta_L + \sum_{k=1}^\infty \left(\Delta_{(2k+1)L}-\Delta_{(2k-1)L}\right)\cdot P(X(\Delta_{(2k-1)L}) > L)\\
      &\leq \Delta_L + \sum_{k=1}^\infty \left(1+\frac{2L}{(1-\xi)\alpha}\right)\xi^k\\
      &= \Delta_L + \left(1+\frac{2L}{(1-\xi)\alpha}\right)\frac{\xi}{1-\xi}.
   \end{align*}
   Then~(\ref{eq:derivedA2}) follows by taking $\xi$ small enough with respect to $\epsilon$.
   Hence, we conclude that for all $L$ large enough,
   \begin{align*}
     \frac{\E[\tilde M_L]}{\E[M_L]}
     \geq \frac{\delta (\alpha-\epsilon)}{1+\lambda}.
   \end{align*}
   Thus, the condition in \eqref{eq:suffcond} is satisfied
   when $\nu_0=1-\mu$ 
   as long as $\mu > 1 - \frac{\alpha \delta}{1 + \lambda}$.
\end{proof}

\section{Proof of Theorem \ref{theo:mu_c_positive_amenable}}
\label{sec:proof_of_theorem_mu_c_positive_amenable}
\begin{proof}[Proof of Theorem \ref{theo:mu_c_positive_amenable}]
	Since $G$ is amenable and vertex transitive, we can take a sequence of subsets $\{V_n\}_{n\geq1}$ of $V$ such that 
	$V_n \to V$ as $n\to\infty$, there exists a vertex $x\in\bigcap_{n=1}^\infty V_n$, and 
	$$
	   \frac{|\partial V_n|}{|V_n|} \text{ is non-increasing and goes to $0$ as $n\to\infty$},
	$$
	where $\partial V_n$ denotes the external boundary of $V_n$; that is, the set of vertices in $V\setminus V_n$ that have an edge incident to $V_n$.
	Let $B_K$ be the ball of radius $K$ centered at $x$, and recall that $m_{B_K}(x)$ is the number of instructions used at $x$ to stabilize $B_K$.
	If we assume that $\mu>\mu_c$, then the 0-1 law (Lemma~\ref{prop:lemma4}) implies that $\Pr\lr{m_{B_K}(x)\geq 1}\to 1$ as $K\to\infty$.
	By monotonicity of this probability, for any fixed $\epsilon>0$, we can find $K=K(\epsilon)$ large enough such that 
	$$
	   \Pr\lr{m_{B_K}(x)\geq 1} \geq 1-\epsilon.
	$$
	
	For any set $V_n\subset V$, let $V_n^K$ be the set obtained by taking the union of balls of radius $K$ centered at each vertex of $V_n$. 
	Hence $V_n^K\supset V_n$. Let $N_{n,K}$ be the number of particles inside $V_n^K$ prior to the stabilization of $V_n^K$ and let $N^s_{n,K}$ be the number 
	of sleeping particles in $V_n^K$ after the stabilization of $V_n^K$. Clearly, $N^s_{n,K}\leq N_{n,K}$ almost surely.
	Let $d$ denotes the degree of each vertex of $G$; so $B_K$ has at most $d^K$ vertices. Note that 
	$$
	   \E[N_{n,k}] 
	   = |V_n^K| \mu
	   \leq \lr{|V_n| + d^K |\partial V_n|}\mu
	   = \lr{1 + d^K \frac{|\partial V_n|}{|V_n|}} |V_n| \mu.
	$$
	Also, from \eqref{eq:lbound} in Theorem~\ref{theo:boundsQ}, we have
	$$
	   \E[N_{n,K}^s]
	   \geq \sum_{y \in V_n} Q(y,V_n^K)
	   \geq \lr{\frac{\lambda}{1+\lambda}}\sum_{y \in V_n} \Pr\lr{m_{V_N^K}(y)\geq 1}.
	$$
	Since $V_N^K$ contains a ball of radius $K$ centered at $y$, by monotonicity and transitivity we obtain
	$$
	   \E[N_{n,K}^s]
	   \geq |V_n|\lr{\frac{\lambda}{1+\lambda}}\sum_{y \in V_n} \Pr\lr{m_{B_K}(x)\geq 1}
	   \geq |V_n| \lr{\frac{\lambda}{1+\lambda}}\lr{1-\epsilon}. 
   $$
	Since $\E[N_{n,k}] \geq \E[N_{n,k}^s]$, placing the two inequalities together yields
	$$
	   \mu \geq \lr{\frac{\lambda}{1+\lambda}}\lr{1-\epsilon}\lr{1 + d^K \frac{|\partial V_n|}{|V_n|}}^{-1}.
	$$
	Now set $n=n(d,K)$ large enough such that $\mu\geq \lr{\frac{\lambda}{1+\lambda}}\lr{1-\epsilon}^2$. 
	Therefore, assuming that $\mu>\mu_c$ implies that $\mu\geq \lr{\frac{\lambda}{1+\lambda}}\lr{1-\epsilon}^2$, which completes the proof since $\epsilon>0$ is arbitrary.
\end{proof}

\section{Proof of Theorem \ref{theo:tree}}\label{sec:proof_of_theo_tree}
In order to show that $\mu_c>0$ for any $\lambda>0$ when $G$ is a $d$-regular tree, we will relate an stabilization procedure to a certain branching process in $\mathbb{Z}$. 
To avoid ambiguity we will refer to the particles of the branching process as \emph{tokens}. Initially the branching process starts with $d$ tokens at position $1$.
We will show that $\mu_c>0$ holds if with positive probability we have that no token ever visits a position $k\leq 0$.

We start defining this branching process.
Start with $d$ tokens at position $1$. The process evolves in discrete steps, where at each step we update the position of each token independently. 
Given a token at position $k\in\mathbb{Z}$, we update it as follows. 
With probability $\alpha$, the token advances one position, jumping to position $k+1$. 
With probability $1-\alpha$, there is a \emph{branching}. This means that the token is deleted and is replaced by $\ell d$ tokens at position $k-\ell$, 
where $\ell\geq 1$ is an independent geometric random variable of success probability $\beta$; i.e., $P(\ell = z)=(1-\beta)^{z-1}\beta$.
The value $d$ here will later be the same as the degree of the 
tree, that is why we choose to use the same letter, while $\alpha$ and $\beta$ are some additional parameters that will be related to $\mu$ and $\lambda$.

\begin{lem}\label{lem:branch}
   For all $d\geq1$ and all $\beta\in(0,1)$, there exists $\alpha_0\in(0,1)$ large enough so that, for all $\alpha \in (\alpha_0,1]$, with positive probability there will never be a token in positions $k\leq0$.
\end{lem}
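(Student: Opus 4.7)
The plan is a first-moment argument. Let $N$ denote the total number of tokens (counted at birth) ever created at positions $k\leq 0$ when the process starts with $d$ tokens at position $1$. Since $P(N=0)\geq 1-E[N]$ by Markov's inequality, it suffices to show that $E[N]<1$ for $\alpha$ sufficiently close to $1$. (At $\alpha=1$ the statement is trivial, as no token ever branches.) By independence of the initial tokens, $E[N]=d\cdot E[N_1]$, where $E[N_1]$ is the analogous expectation starting from a single token at position $1$.

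To analyse $E[N_1]$, I would regroup the dynamics of each token into one ``branching step'': a token at $k$ first advances $G\sim\mathrm{Geom}(\alpha)$ times (with $P(G=j)=\alpha^j(1-\alpha)$ for $j\geq 0$) before being replaced by $M:=\ell d$ children at the common position $k+X$, where $X:=G-\ell$ and $\ell\sim\mathrm{Geom}(\beta)$. The process is then a branching random walk in which all siblings share a single displacement, and the many-to-one formula in this setting reads
\[
E\Bigl[\sum_{v\in\mathrm{gen}\,n} f(X_v)\Bigr] \;=\; m^n\,E[f(\tilde S_n)], \qquad m:=E[M]=\tfrac{d}{\beta},
\]
where $\tilde S_n=\tilde X_1+\cdots+\tilde X_n$ is a sum of i.i.d.\ copies of the step $\tilde X$ obtained by size-biasing $X$ by $M$, i.e.\ $E[g(\tilde X)]=E[M\,g(X)]/m$. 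Choosing $f(x)=\mathbf{1}[x\leq 0]$ (tokens start at $1$) gives
\[
E[N_1] \;=\; \sum_{n\geq 1} m^n\,P(\tilde S_n\leq -1).
\]

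Next I would apply Chernoff: for any $\theta\in(0,\log\tfrac{1}{1-\beta})$, one has $P(\tilde S_n\leq -1)\leq e^{-\theta}\bigl(E[e^{-\theta\tilde X}]\bigr)^n$, and a direct computation using the independence of $G$ and $\ell$ yields
\[
E[e^{-\theta\tilde X}] \;=\; \frac{E[Me^{-\theta X}]}{m} \;=\; \frac{(1-\alpha)\,\beta^2\,e^\theta}{(1-\alpha e^{-\theta})\bigl(1-(1-\beta)e^\theta\bigr)^2}.
\]
Fix any such $\theta$. The prefactor $(1-\alpha)$ forces this expression, and hence $r(\alpha):=m\cdot E[e^{-\theta\tilde X}]$, to vanish as $\alpha\to 1$ while the denominator stays bounded away from $0$. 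Summing the geometric series then gives $E[N_1]\leq e^{-\theta}r(\alpha)/(1-r(\alpha))$, which can be made smaller than $1/d$ by choosing $\alpha$ close enough to $1$, so that $E[N]<1$ as desired.

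The main technical point is setting up the many-to-one formula in this ``common-sibling-displacement'' form: one must size-bias the step $X=G-\ell$ by the number of children $M=\ell d$ (equivalently, size-bias $\ell$), rather than treating the children as carrying independent copies of $X$. A naive Lyapunov approach with $\phi(x)=e^{-\theta x}$ fails outright when $d(1-\beta)\geq 1$, because the constraint $e^\theta>d$ (needed to make the optional-stopping bound $de^{-\theta}$ less than $1$) is incompatible with $e^\theta<1/(1-\beta)$ (needed for the Laplace transform to be finite). The many-to-one route avoids this obstacle entirely since convergence only requires $r(\alpha)<1$, which is automatic for $\alpha$ close to $1$ regardless of $d$ and $\beta$.
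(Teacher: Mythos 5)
Your argument is correct and takes a genuinely different route from the paper's. The paper analyses the ungrouped (one-step-at-a-time) process via the potential $\Psi_t=\sum_i\gamma^{k_i}$ with $\gamma=\sqrt{1-\beta}$: it verifies that $\Psi_t$ is a supermartingale once $\alpha$ is close to $1$, but because the initial value $\Psi_0=d\gamma$ is not guaranteed to be below $1$ (and cannot be, for any admissible $\gamma>1-\beta$, once $d(1-\beta)\geq 1$ --- precisely the obstruction you identify), it inserts a ``burn-in'' event $\mathcal{E}$ of probability $\alpha^{dR}>0$ on which every token advances $R$ steps without branching, forcing $\Psi_R<1/5$, and then concludes via Doob's upcrossing inequality. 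You instead collapse each run of advances into a single regrouped branching step and use the many-to-one identity for a branching random walk with \emph{common} sibling displacement, size-biasing the step $X=G-\ell$ by the offspring count $M=\ell d$, followed by a Chernoff bound. The payoff is that the factor $E[e^{-\theta G}]=\tfrac{1-\alpha}{1-\alpha e^{-\theta}}$ supplies the $(1-\alpha)$ that drives $r(\alpha)\to 0$ as $\alpha\to1$, so no burn-in event and no upcrossing lemma are needed: the whole proof reduces to summing a geometric series. The cost is the one you flag --- one must size-bias by $M=\ell d$ rather than treat the children as carrying independent displacements --- and your formula for $E[e^{-\theta\tilde X}]$ checks out. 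Both routes operate in the same window $\theta\in(0,\log\tfrac{1}{1-\beta})$ for the Laplace transform of $\ell$ to be finite (the paper's choice corresponds to $e^\theta=1/\sqrt{1-\beta}$); yours works for any $\theta$ in that window because you never need the extra constraint $de^{-\theta}<1$ at the starting configuration.
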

\begin{proof}
   Let $\gamma = \sqrt{1-\beta}$.
   At time $t$, if $k_1,k_2,\ldots$ are the positions of the tokens at that time, define the function
   $$
      \Psi_{t} = \sum_{i\geq 1}\gamma^{k_i}.
   $$
   Let $R$ be the smallest value such that $d\gamma^{R+1}<1/5$, and consider the event 
   \begin{equation}
      \mathcal{E} = \left\{\text{in the first $R$ steps all tokens advance and do not branch}\right\}.
   \end{equation}
   Note that $P(\mathcal{E})=\alpha^{dR}$, and $\mathcal{E}$ implies that 
   at time $R$ all tokens are at position $R+1$. Thus,
   $$
      P(\Psi_R < \tfrac{1}{5}) \geq P(\mathcal{E}) = \alpha^{dR}.
   $$
   Note that if any token reaches a position $k\leq 0$, then we have $\Psi_t\geq 1$. 
   We show that with positive probability $\Psi_t<1$ for all $t\geq R$.
   For this it suffices to show that $\Psi_t$ is a supermartingale, provided $\alpha$ is large enough. 
   
   Let $\mathcal{F}_t$ denote the
   filtration given by the position of the tokens at times $0,1,\ldots,t$. 
   Now we compute the change in $\Psi_t$ in one step. 
   Define $\Delta_k$ as the expected change in $\Psi_t$ caused by moving a token from position $k$, assuming that there is at least one token at position $k$ at that time. 
   By the form of $\Psi_t$, we have that $\Delta_k$ does not depend on $t$.
   Given a token at position $k$, since the token advances one position with probability $\alpha$, and gets replaced by $\ell d$ tokens at position $k-\ell$ with probability
   $(1-\alpha)(1-\beta)^{\ell-1}\beta$, we have that 
   \begin{align*}
      \Delta_k
      &=-\gamma^k + \alpha \gamma^{k+1} + (1-\alpha)\sum_{\ell=1}^\infty \ell d \beta (1-\beta)^{\ell-1} \gamma^{k-\ell}\\
      &=-\gamma^k\left(1 - \alpha \gamma - \frac{d\beta (1-\alpha)}{1-\beta}\sum_{\ell=1}^\infty \ell \left(\tfrac{1-\beta}{\gamma}\right)^{\ell}\right).
   \end{align*}
   Since $\frac{1-\beta}{\gamma}=\gamma<1$, the sum above converges to $\frac{\gamma}{(1-\gamma)^2}$. 
   This and replacing $1-\beta$ with $\gamma^2$ yield
   $$
      \Delta_k
      =-\gamma^k\left(1 - \alpha \gamma - \frac{d(1-\gamma^2)(1-\alpha)}{\gamma (1-\gamma)^2}\right)
      =-\gamma^k\left(1 - \alpha \gamma - \frac{d(1+\gamma)(1-\alpha)}{\gamma (1-\gamma)}\right).
   $$
   Hence,
   \begin{align*}
      \E\lr{\Psi_{t+1} \mid \mathcal{F}_t}
      = \Psi_t + \sum_{i\geq1}\Delta_{k_i}
      &= \Psi_t - \sum_{i\geq1}\gamma^{k_i}\left(1 - \alpha \gamma - \frac{d(1+\gamma)(1-\alpha)}{\gamma (1-\gamma)}\right)\\
      &= \Psi_t \left(\alpha \gamma + (1-\alpha)\frac{d(1+\gamma)}{\gamma (1-\gamma)}\right).
   \end{align*}
   Note that we can make the term inside the parenthesis as close to $\gamma$ as possible by having $\alpha$ close to $1$. 
   So, since $\gamma<1$, by having $\alpha$ close enough to $1$ we obtain that $\{\Psi_t\}_t$ is a supermartingale.
   Let $\tau$ be the first time that $\Psi_{R+\tau}\geq 1$ and $n$ be any positive integer. 
   We apply the optional stopping theorem for the almost surely bounded stopping time $\tau\land n$, and obtain under the event $\{\Psi_R< 1/5\}$ that 
   $$
      \frac{1}{5} > \Psi_R \geq E\lr{\Psi_{R+(\tau \land n)}} \geq E\lr{\Psi_{R+\tau} \mid \tau \leq n} P\lr{\tau \leq n} \geq P\lr{\tau \leq n}.
   $$
   Since $n$ is arbitrary, the probability that there will never be a token in positions $k\leq 0$ is at least $\alpha ^{dR} \frac{4}{5}$. 
\end{proof}

Before proceeding to the proof of Theorem~\ref{theo:tree}, we state a well-known lemma regarding random walks on regular trees. 
\begin{lem}\label{lem:pl}
   For any $\ell$, let $p_\ell$ be the probability that a random walk starting at distance $\ell$ from the origin ever 
   visits the origin. Then, for a $d$-regular tree we have 
   $
      p_\ell = \lr{\frac{1}{d-1}}^\ell.
   $
\end{lem}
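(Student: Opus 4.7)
The plan is to reduce the problem to a one-variable first-passage identity and then solve a quadratic. First I would argue that $p_\ell = p_1^\ell$. The point is that, on a $d$-regular tree, every vertex $v$ at distance $k\geq 1$ from the origin has a unique neighbor that is closer to the origin (its ``parent''), so the walk can only reduce its distance to the origin in increments of one. By the strong Markov property applied at the first hitting time of distance $\ell-1$, distance $\ell-2$, and so on, together with the fact that the subtrees away from the origin are all isomorphic, each successive first-decrease event occurs independently with the same probability $p_1$.

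Next, to compute $p_1$, I would condition on the first step from a vertex at distance $1$. With probability $1/d$ the walk jumps to the origin, and with the remaining probability $(d-1)/d$ it jumps to one of the $d-1$ neighbors at distance $2$, from which the hitting probability is $p_2 = p_1^2$. This gives
\[
   p_1 \;=\; \frac{1}{d} + \frac{d-1}{d}\, p_1^{\,2},
\]
i.e.\ $(d-1)\,p_1^{\,2} - d\,p_1 + 1 = 0$, whose roots are $p_1 = 1$ and $p_1 = \tfrac{1}{d-1}$.

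Finally, I would rule out the spurious root $p_1 = 1$ in the case $d\geq 3$ by invoking transience: simple random walk on a $d$-regular tree with $d\geq 3$ has a strict drift $\tfrac{d-2}{d}$ away from the origin, so the walker visits the origin only finitely often and $p_1 < 1$; hence $p_1 = \tfrac{1}{d-1}$, and raising to the $\ell$-th power yields the claim. For $d=2$ the formula reads $p_\ell = 1$, which matches recurrence of simple random walk on $\mathbb{Z}$, so the statement holds in the only remaining case of interest.

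The only step that is not entirely mechanical is the factorization $p_\ell = p_1^\ell$, and here the key observation is that the tree structure forces the walk to pass through each intermediate distance level, so iterated application of the strong Markov property (together with the transitivity of the tree action on vertices at a given distance from the origin) gives the desired product form. No serious obstacle is expected beyond making this last point precise.
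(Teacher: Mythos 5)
Your proof is correct and takes essentially the same route as the paper: both reduce to the first-step recursion $p_1 = \tfrac{1}{d} + \tfrac{d-1}{d}p_1^2$ (equivalently, $a^\ell = \tfrac{1}{d}a^{\ell-1} + \tfrac{d-1}{d}a^{\ell+1}$ under the ansatz $p_\ell = a^\ell$) and select the root in $(0,1)$. You are somewhat more careful than the paper's one-line proof, in that you justify the factorization $p_\ell = p_1^\ell$ directly via the strong Markov property and tree isomorphism rather than positing the geometric form as an ansatz, and you explicitly rule out the spurious root by transience.
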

\begin{proof}
   The lemma follows by checking that if we set $p_\ell=a^\ell$ for some $a>0$, then $a=\frac{1}{d-1}$ 
   is the only solution in $(0,1)$ of the recursion 
   $a^\ell = \frac{1}{d}a^{\ell-1} + \lr{\frac{d-1}{d}}a^{\ell+1}$.
\end{proof}

\begin{proof}[Proof of Theorem~\ref{theo:tree}]
   Since a simple random walk in a $d$-regular tree has positive speed, Theorem~\ref{theo:activity_non_amenable} gives that $\mu_c<1$ for any $\lambda>0$.
   Also, since a $d$-regular tree with $d\geq3$ is a transient graph, Theorem~\ref{theo:transient_graphs} gives that $\lim_{\lambda\downarrow 0}\mu_c=0$.
   It remains to show that $\mu_c>0$ for any $\lambda>0$.   
   
   For any $\rho\in(0,1)$, let $\mathrm{Geo}_{\rho}$ be a geometric random variable of success probability $\rho$.
   We assume that, at each vertex $v$, the number of particles initially located at $v$ is distributed independently according to the distribution of $\mathrm{Geo}^*_{1-\mu}=\mathrm{Geo}_{1-\mu}-1$. 
   This is enough for our purposes, because $P(\mathrm{Geo}^*_{1-\mu}=0)=1-\mu$, so $\mathrm{Geo}^*_{1-\mu}$ stochastically dominates a Bernoulli random variable of mean $\mu$.
   Then using monotonicity of ARW (cf.\ Lemma~\ref{prop:lemma3}), if for a given $\mu>0$ ARW almost surely fixates starting from this initial configuration of particles, 
   ARW almost surely fixates starting from a Bernoulli field of particles of density $\mu$. This will establish that $\mu_c>0$. 

   We will employ a beautiful stabilization procedure developed by Rolla and Sidoravicius~\cite{Rolla} for the one-dimensional lattice $\mathbb{Z}$. 
   We will need to carry out a much more delicate analysis for the case of a $d$-regular tree. 
   Let $x_1,x_2,\ldots$ be the particles ordered according to their initial
   distance to the origin, with $x_1$ being the closest particle to the origin. 
   Let $L$ be an arbitrarily large integer, and consider the finite system inside $B_L$, the ball of radius $L$ around the origin.
   Our goal is to show that with positive probability we stabilize $B_L$ without any particle visiting the origin.
   The idea is to move particles in order, ignoring some sleep instructions and stopping them when they see a sleep instruction near the origin. We do this to 
   pack the particles as close as possible to the origin in such a way that the gap between the particles that have already been moved and the particles that have not yet been moved 
   increases with time. This creates more room for particles to fixate, allowing the stabilization procedure to be carried out until the end without activating any particle that was moved before.
   
   Now we describe the stabilization procedure in details. We will define sets $C_k\subset V$, $k\geq 0$. 
   Let $C_0$ consist of only the origin. 
   We start by moving particle $x_1$ repetitively, ignoring all sleep instructions, until it either reaches $V\setminus B_L$ or $C_0$.
   If it reaches $V\setminus B_{L}$, then we set $C_1=C_0$. 
   Otherwise, let $z_1,z_2,\ldots,z_T\in V$ be the sequence of vertices visited by $x_1$, with $z_1$ being the initial location of $x_1$ and $z_T\in C_0$. 
   Define $\tau$ to be the largest integer so that $x_1$ ignored a sleep instruction at $z_\tau$; if $x_1$ never ignored a sleep instruction
   until it reaches the origin, we declare that the procedure failed. If the procedure has not failed,
   set $C_1 = C_0 \cup \{z_\tau,z_{\tau+1},\ldots,z_{T-1}\}$. 
   Using the terminology in~\cite{Rolla}, we see $C_1$ as the set of \emph{corrupted} vertices after $x_1$ is moved. 
   If after defining $C_1$ we have that at least one of the subsequent particles $x_2,x_3,\ldots$ is located inside $C_1$, we declare that the procedure fails.
   
   The idea behind the definition of $C_1$ is the following. 
   We would like to move $x_1$ as close as possible to $C_0$, to the point that we stop $x_1$ at the last sleep instruction it sees before visiting $C_0$. However, in order to observe that $z_\tau$ is the 
   vertex where the last sleep instruction is seen by $x_1$, we need to observe the instructions at $z_{\tau+1},z_{\tau+2},\ldots,z_{T-1}$. 
   This corrupts the array of instructions at the vertices $z_{\tau+1},z_{\tau+2},\ldots,z_{T-1}$, so we cannot use these arrays of instructions when we move the subsequent particles. These vertices, together with 
   $x_\tau$ and $C_0$, are the ones forming $C_1$. 
   
   We then repeat the procedure above. After having moved $x_{k-1}$, we move $x_k$ repetitively until it either reaches $V\setminus B_{L}$ (in which case we set $C_k=C_{k-1}$) 
   or it reaches $C_{k-1}$ (in which case we define $C_k$ as the vertices in $C_{k-1}$ plus all
   vertices visited by $x_k$ since the last sleep instruction $x_k$ sees). The procedure fails if $x_k$ visits $C_{k-1}$ before $V\setminus B_L$ and before seeing any sleep instruction, or if 
   at least one of the subsequent particles $x_{k+1},x_{k+2},\ldots$ is located inside $C_k$.
   For each $k\geq 1$, let $E_k$ be the event that the procedure does not fail when we move $x_k$. 
   Our goal is to show that there exists a positive constant $c$ so that, for all $L\geq 1$, we have 
   \begin{equation}
      \PR\lr{\bigcap\nolimits_{k=1}^{n_L}E_k}\geq c >0,
      \label{eq:claim}
   \end{equation}
   where $n_L$ is the number of particles initially inside $B_L$.
   When~\eqref{eq:claim} holds, we have that this procedure stabilizes $B_L$, without using any instruction at the origin. By the zero-one law (Lemma~\ref{prop:lemma4}), this implies that ARW fixates almost surely.
   
   In order to establish~\eqref{eq:claim}, we will relate this stabilization procedure with a branching process $\mathcal{B}$ on $\mathbb{Z}$, 
   and compare $\mathcal{B}$ with the branching process defined in the beginning of the section, which we denote by $\mathcal{B}'$.
   The processes $\mathcal{B}$ and $\mathcal{B}'$ start with the same number of tokens, and at the same locations. 
   Then we couple $\mathcal{B}$ and $\mathcal{B}'$ to show that at any time
   we can associate each token $\iota$ of $\mathcal{B}$ with a distinct token $\iota'$ of $\mathcal{B'}$ such that 
   \begin{equation}
      \text{the position of $\iota$ is not smaller than that of $\iota'$.}
      \label{eq:brprop}
   \end{equation}
   If at any time the stabilization procedure fails, then we halt $\mathcal{B}$. 
   We will show that this can only happens when $\mathcal{B}'$ has a token at some position $k\leq 0$. 
   This will allow us to use Lemma~\ref{lem:branch} to establish~\eqref{eq:claim}, as the construction of $\mathcal{B}$ and its coupling with 
   $\mathcal{B}'$ will imply that 
   \begin{align}
      \PR\lr{\bigcap\nolimits_{k=1}^{n_L}E_k}
      \geq \PR\lr{\text{no token of $\mathcal{B}'$ visits position $k\leq 0$}}.
      \label{eq:b}
   \end{align}
   
   The initial configuration of $\mathcal{B}$ will be 
   $d$ tokens at position $1$. Each token is associated with one connected component of the graph obtained from $B_L$ by removing $C_0$;
   we denote this graph by $G\setminus C_0$. 
   Since $G$ is a tree, a particle that starts in one component of $G\setminus C_0$ cannot jump to a vertex in another component without visiting $C_0$.
   This will imply that tokens evolve independently of one another.
   The initial configuration of $\mathcal{B}'$ will be identical to that of $\mathcal{B}$, 
   and we associate each token of $\mathcal{B}$ to a distinct token of $\mathcal{B}'$. Note that, for this initial 
   configuration~\eqref{eq:brprop} holds.
   
   Let $\mathcal{L}=\{v_1,v_2,\ldots\}$ be an ordered list of the vertices of $B_L\setminus C_0$, where the vertices are sorted according to the order they are 
   visited in a breadth-first search in $G$ starting from the origin. 
   Thus, for any $i<j$, $v_i$ is not furthest away from the origin than 
   $v_j$.
   Given any subset of vertices $S$ of $G$, and any vertex $v$ of $G$ that is not in $S$, we denote by $d_G(v,S)$ as the distance between $v$ and $S$; that is,
   $$
      d_G(v,S) = \min\{\text{distance between $v$ and $u$ in $G$} \colon u\in S\}.
   $$
   One important point is that we will not sample yet the locations of the particles $x_1,x_2,\ldots$. The procedure for updating $\mathcal{B}$ is to 
   consider the vertex $v$ that is at the front of the $\mathcal{L}$, and check whether $v$ hosts a particle. 
   If the vertex does host a particle, then we move that particle according to the stabilization procedure, 
   which will cause the token corresponding to the component of $v$ to move and/or branch. Depending on the outcome, we may remove $v$ from the list. 
   Then we iterate this procedure. 
   
   We will now describe precisely how $\mathcal{B}$ is updated, and later will show how to couple $\mathcal{B}'$ with $\mathcal{B}$. 
   Assume that, at some moment, the vertex that is at the front of the list is $v$, 
   and that we have already discovered and moved the particles $x_1,x_2,\ldots,x_{j}$.
   Assume that the number of tokens of $\mathcal{B}$ is the number of connected components of $G\setminus C_j$, that 
   $\mathcal{B}'$ has at least as many tokens as $\mathcal{B}$, and that property~\eqref{eq:brprop} holds. 
   Let $S$ be the connected component of $v$ in $G\setminus C_j$, 
   and let $\iota$ be the token of $\mathcal{B}$ corresponding to $S$.
   Assume that $\iota$ is located at position $d_G(v,C_j)$, and that $\iota'$ is the token of $\mathcal{B}'$ associated with $\iota$. 
   Note that all the properties above hold for the initial configuration.

   We start the update of $\mathcal{B}$ by checking whether $v\in C_j$. If this is the case, the stabilization procedure failed, 
   so we halt $\mathcal{B}$.
   Otherwise, we update $\mathcal{B}$ in two phases. 
   Recall that the initial number of particles at $v$ is distributed according 
   to $\mathrm{Geo}^*_{1-\mu}$. We will observe the particles at $v$ one-by-one. This means that in the first time we consider $v$ we will check whether 
   $v$ hosts at least one particle, which happens with probability $\mu$. If this is the case, we move this particle and update $\mathcal{B}$ 
   according to the two phases described below. 
   Then, in the next update of $\mathcal{B}$, we will consider $v$ again, and ask whether $v$ hosts at least two particles. The conditional probability of this 
   event given that $v$ hosts at least 
   one particle is again $\mu$. If this happens, we then move this second particle as we will describe below. 
   We iterate this, each time checking whether $v$ hosts another particle, until we find out 
   that $v$ does not host another particle. At this point we move to another vertex.
   We now describe the two phases for the update of $\mathcal{B}$. The first phase 
   starts by checking whether $v$ hosts another particle.  
   
   First phase, case one: no other particle at $v$. As explained above, this happens with probability $1-\mu$, regardless of how many times $v$ 
   has been checked before. In this case, we remove $v$ from $\mathcal{L}$,
   do not change $\mathcal{B}$, and jump to the second phase.
   
   First phase, case two: there is a new particle $x_{j+1}$ at $v$. This happens with probability $\mu$. 
   We move $x_{j+1}$ according to the stabilization procedure (see Figure~\ref{fig:branch}).
   If $x_{j+1}$ does not visit $C_j$ before leaving $B_L$, nothing happens, and we jump to the second phase.
   Otherwise, let $M_{j+1}$ be the number of vertices that $x_{j+1}$ visits from the last sleep instructions it sees until visiting $C_j$. 
   If the stabilization procedure fails\footnote{Note that one of the reasons for the stabilization procedure to fail 
   is because the set of newly corrupted sites contains a particle 
   that has not yet been moved. We cannot detect that the procedure fails because of this at this moment, because the locations of 
   the particles that have not yet been moved are not known. But this can be detected in future updates of $\mathcal{B}$, when checking whether $v\in C_j$.
   If this is the case, we only halt $\mathcal{B}$ at that time.}, 
   then we halt $\mathcal{B}$. Otherwise, we obtain a set $C_{j+1}$ which satisfies $C_{j+1}\setminus C_j \subset S$.
   We update $\mathcal{B}$ by removing the token $\iota$ and replacing it by as many tokens as the number of connected components of 
   $S\setminus C_{j+1}$. For each such component $S'$ of $S\setminus C_{j+1}$, let the token corresponding to that component be located at position 
   $\min_{u\in\mathcal{L}\cap S'} d_G(u,C_{j+1})$. 
   Since each instruction is a sleep instruction with probability $\frac{\lambda}{1+\lambda}$, we have that 
   \begin{equation}
      \text{$M_{j+1}$ is stochastically dominated by $\mathrm{Geo}_{\frac{\lambda}{1+\lambda}}$}.
      \label{eq:mell}
   \end{equation}
   Also, note that 
   \begin{equation}
      \text{$C_{j+1}$ splits $S$ into at most $d\,M_{j+1}$ connected components}.
      \label{eq:mell2}
   \end{equation}
   The reason is that for each new connected component $S'$, there must exist an edge in $G$ 
   from $S'$ to $C_{j+1}\setminus C_j$, which is a set with $M_{j+1}$ vertices. Also, each new token created in $\mathcal{B}$ has position 
   at least $d_G(v,C_j)-M_{j+1}$, where we recall that $d_G(v,C_j)$ was the position of $\iota$ in $\mathcal{B}$.
   Then we go to the second phase without removing $v$ from the list. 
   \begin{figure}[tbp]
      \begin{center}
         \hspace{\stretch{1}}
         \includegraphics[scale=1.2]{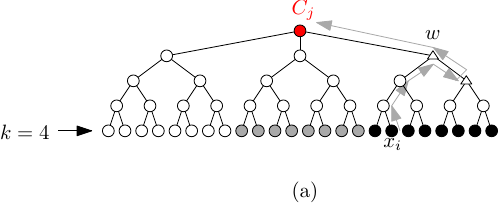}
         \hspace{\stretch{1}}
         \medskip
         
         \includegraphics[scale=1.2]{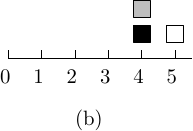}
         \hspace{\stretch{1}}
         \includegraphics[scale=1.2]{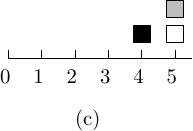}
         \hspace{\stretch{1}}
         \includegraphics[scale=1.2]{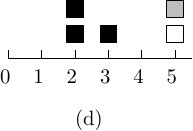}
      \end{center}\vspace{-.5cm}
      \caption{Illustration of how to update $\mathcal{B}$ when observing vertices at distance $k=4$ from $C_j$ (red vertex). After all white vertices in~(a) have been observed, 
      $\mathcal{B}$ is as illustrated in~(b), with a white/grey/black square corresponding to the token of the component with white/grey/black vertices at distance $k$ from $C_j$.
      Then, after observing all grey vertices, which led to no particle visiting $C_j$, the grey token in $\mathcal{B}$ advances one position, as in~(c).
      Next, when observing the black vertices we find particle $x_{j+1}$, which moves according to the grey arrows and sees a sleep instruction in 
         its first visit to $w$. Then we add the triangular vertices to the set of corrupted vertices, 
         and the black token branches into three tokens, one for each connected component created by removing the triangular vertices 
         from the component of the black vertices. After this, $\mathcal{B}$ becomes the configuration in~(d).}
      \label{fig:branch}
   \end{figure}
   
   Second phase. 
   Let $C$ be the set of corrupted sites after the end of the first phase; 
   that is, $C=C_j$ if there were no other particle at $v$ (first case above), or $C=C_{j+1}$ (second case above).
   We check whether $\mathcal{L}$ contains any vertex $u$ in the same component of the graph $G\setminus C$ as $v$, 
   and whose distance (in $G$) to the origin of $G$ is the same as $v$.
   If this is the case (which includes the case that $v$ remains in $\mathcal{L}$ after the end of the first phase), then nothing is done and we end the 
   second phase.
   Otherwise, and this happens only if there was no additional particle at $v$, we take the token $\iota$ and advance it by one position, moving it to position
   $d_G(v,C)+1$. Refer to Figure~\ref{fig:branch}(a--c). This concludes the second phase.
   
   We then iterate the two phases above until $\mathcal{L}$ becomes empty. Since the vertex at the front of $\mathcal{L}$ is removed at each iteration 
   with probability $1-\mu$, each vertex is processed a finite number of times, almost surely, and the procedure ends almost surely.
   
   Now we couple $\mathcal{B}'$ with $\mathcal{B}$ and show that property~\eqref{eq:brprop} continues to hold.
   Recall that $\mathcal{B}'$ is defined in terms of three parameters $\alpha,\beta$ and $d$, 
   where $d$ is the degree of vertices in $G$. For the other parameters, we set 
   $$
      \alpha = \exp\left(-\frac{\mu}{(1-\mu)(d-1)}\right)
      \quad\text{and}\quad
      \beta = \frac{\lambda}{1+\lambda}.
   $$
   First we estimate the probability that a token $\iota$ is advanced from position $k$ to $k+1$. Let $C$ be the 
   set of corrupted sites at the time the token is advanced, and let $u_1,u_2,\ldots,u_{\kappa}$ be the vertices 
   from the component of $G\setminus C$ that is associated to $\iota$ and satisfy $d_G(u_j,C)=k$ for all $j$; note that $\kappa=(d-1)^{k-1}$.
   Note that $\iota$ only advances by one position if each $u_1,u_2,\ldots,u_{\kappa}$ has no particle that reaches $C$ before leaving $B_L$. 
   If there is a particle at some $u_j$, then the probability that this particle visits $C$ before leaving $B_L$ is at most $p_k$, where $p_k$ is 
   defined in Lemma~\ref{lem:pl}.
   Therefore,
   the probability that $\iota$ advances to $k+1$ is at least 
   \begin{align*}
      \left(\sum\nolimits_{i\geq 0} \mu^i (1-\mu) (1-p_k)^i\right)^{(d-1)^{k-1}}
      &= \left(\frac{1-\mu}{1-\mu+\mu p_k}\right)^{(d-1)^{k-1}}\\
      &\geq \exp\left(-\frac{\mu p_k}{1-\mu} \cdot(d-1)^{k-1}\right)\\
      &= \alpha,
   \end{align*}
   where in the inequality we used that $\frac{1}{1+\epsilon}\geq \exp(-\epsilon)$ for all $\epsilon>0$. Therefore, we can couple $\iota$ and 
   $\iota'$ such that if $\iota'$ advances by one position, so does $\iota$. 
   
   Now if $\iota'$ branches, then $\iota$ may branch or advance. If $\iota$ advances, then property~\eqref{eq:brprop} continues to hold regardless of 
   how $\iota'$ branches, since new tokens of $\mathcal{B}'$ will all have positions smaller than that of $\iota$. 
   If $\iota$ also branches, then by~\eqref{eq:mell} and the value of $\beta$, the number of new tokens created by the branch of $\iota'$ 
   stochastically dominates $d\,M_{j+1}$, which is not smaller than the number of new tokens created by the branch of $\iota$. 
   The positions of the new tokens of $\mathcal{B}'$ are not larger than $d_G(v,C_j)-M_{j+1}$, which is a lower bound for the position of the new tokens 
   of $\mathcal{B}'$. Therefore, we can couple the branch of $\iota$ and the branch of $\iota'$ such that 
   each new token of $\mathcal{B}$ can be associated with one distinct new token of $\mathcal{B}'$, and 
   property~\eqref{eq:brprop} continues to hold. We will update $\mathcal{B}'$ even if the stabilization procedure for the branch of $\iota$ failed because the 
   particle encountered no sleep instruction. Note 
   that $M_{j+1}$ is still well defined in this case, and we can perform the coupling between the branch of $\iota'$ and $M_{j+1}$ as described above.
     
   To conclude the proof, we show that if the stabilization procedure fails, then $\mathcal{B}'$ has a token at some position $k\leq 0$. 
   Assume that the particles $x_1,x_2,\ldots,x_j$ have already been observed, and $C_j$ is the current set of corrupted sites. 
   We perform the two phases above repetitively until we find the first particle $x_i$, $i>j$, which visits $C_j$ before exiting $B_L$.
   Note that $C_j=C_{i-1}$.
   Let $v$ be the vertex where $x_i$ started from, and let $k=d_G(v,C_j)$. Note that $k$ is the position of the token $\iota$ of $\mathcal{B}$ corresponding to the 
   component of $v$ in $G\setminus C_j$, and the corresponding token $\iota'$ of $\mathcal{B}'$ is at position at most $k$ by~\eqref{eq:brprop}.
   The procedure cannot fail before finding $x_i$, and there are two events that can make the procedure fail during the move of $x_i$. 
   The first is if $x_{i}$ visits $C_j$ before seeing any sleep instruction. 
   If this happens, then $M_i\geq k$, and all tokens produced from the branching of $\iota'$ will have position not larger than 
   $d_G(v,C_j)-M_{j+1} \leq 0$.
   The second event is if $x_i$ sees sleep instructions, but the new set of corrupted vertices $C_i$ 
   hosts at least one particle that has not yet been moved. 
   This implies that $C_i$ contains a vertex $u$ that is still in $\mathcal{L}$. 
   But by the ordering of the vertices in $\mathcal{L}$, this implies that $d_G(u,C_j)\geq d_G(v,C_j)$ which gives that $M_i\geq k$.
   Therefore the token in $\mathcal{B}$ corresponding to the component of $u$ will be placed at position at most $0$, implying that $\mathcal{B}'$ will
   have a token at position at most $0$.
   
   It is important to remark that the coupling between $\mathcal{B}$ and $\mathcal{B}'$ described above suggests that the tokens of $\mathcal{B}'$ are 
   not updated in the same order as described in the beginning of the section, where all tokens of $\mathcal{B}'$ were moving or branching 
   once in each discrete step. Indeed, a token 
   from $\mathcal{B}$ could branch many times before another token from $\mathcal{B}$ is moved or branches, because 
   for example a vertex $v$ could host more than one particle that visits the set of corrupted sites before leaving $B_L$. However, one can still apply 
   Lemma~\ref{lem:branch} by reordering the updates of $\mathcal{B}$ and $\mathcal{B}'$ accordingly. Letting $A_k$ denote the vertices of $G$ 
   at distance $k$ from the origin, this is possible since tokens move or branch independently of one 
   another, and because for any given $k$ it will
   take only a finite number of steps to remove from $\mathcal{L}$ all vertices of $A_k$. 
   For any $k$, let $\mathcal{I}'_{k}$ be the set of tokens of $\mathcal{B}'$ at the first time that $\mathcal{L}$ 
   contains no vertex from $A_{k}$. Thus, for any $k$, at the moment that $\mathcal{L}$ has no vertex from $A_k$ 
   we obtain that all the tokens from $\mathcal{I}_{k-1}'$ 
   will have moved or branched at least once, giving that each token of $\mathcal{B}'$ will move or branch in a finite time.
   To conclude the proof, note that $\beta$ depends only on $\lambda$. Hence, given any $d$ and $\beta$, 
   we can find $\mu$ small enough to make $\alpha$ close enough to $1$. 
   Then Lemma~\ref{lem:branch} gives that 
   $\PR\lr{\text{no token of $\mathcal{B}'$ visits position $k\leq 0$}}$ is bounded away from $0$, which with~\eqref{eq:b} concludes the proof.
\end{proof}

\section*{Acknowledgments}
We are thankful to G\'abor Pete, Artem Sapozhnikov and Laurent Tournier
for useful discussions. 
Lorenzo Taggi is grateful to Volker Betz for giving him the possibility to work on this project.

\end{document}